\newtheorem{theorem}{Theorem}[section]
\newtheorem{lemma}[theorem]{Lemma}
\newtheorem{proposition}[theorem]{Proposition}
\newtheorem{conjecture}[theorem]{Conjecture}
\newtheorem*{claim}{Claim}
\newtheorem*{question}{Question}
\theoremstyle{definition}
\newtheorem{definition}[theorem]{Definition}
\theoremstyle{remark}
\newtheorem{remark}[theorem]{Remark}
\newtheorem{example}[theorem]{Example}
\DeclareMathOperator{\adm}{adm}
\DeclareMathOperator{\pre}{pre}
\DeclareMathOperator{\FR}{FR}
\DeclareMathOperator{\FS}{FS}
\DeclareMathOperator{\comp}{comp}
\DeclareMathOperator{\fib}{fib}
\DeclareMathOperator{\cyc}{cyc}
\keywords{Ramsey algebra, Hindman'd Theorem, Idempotent ultrafilter}
\subjclass[2000]{Primary 05A17; Secondary 05D10, 03E05}
\begin{document}

\title{Ramsey Algebras and the Existence of Idempotent Ultrafilters
}
\author{Wen Chean Teh}
\address{The Ohio State University, Columbus, OH 43210 United States}
\address{School of Mathematical Sciences\\
Universiti Sains Malaysia\\
11800 USM, Malaysia}
\email{dasmenteh@usm.my}

\begin{abstract}
Hindman's Theorem says that every finite coloring of the positive natural numbers has a monochromatic set of finite sums. 
Ramsey algebras, recently introduced, are structures that satisfy an analogue of Hindman's Theorem.
It is an open problem posed by Carlson whether every Ramsey algebra has an idempotent ultrafilter. 
This paper developes a general framework to study idempotent ultrafilters.
Under certain countable setting, the main result roughly says that every nondegenerate Ramsey algebra has a nonprincipal idempotent ultrafilter in some nontrivial countable field of sets. 
This amounts to a positive result that addresses Carlson's question in some way. 
\end{abstract}

\maketitle

\section{introduction}

The set of natural numbers $\{0,1,2,\dotsc\}$ is denoted by $\omega$. Suppose $\langle x_i \rangle_{i \in \omega}$ is a sequence of natural numbers. Let $\FS(\langle x_i \rangle_{i \in \omega} )$ denote the set  $\{\, \sum_{i \in F} x_i \mid F \in\mathcal{P}_f(\omega)\backslash \{\emptyset\}\,\}$, where $\mathcal{P}_f(\omega)$ is the set of all finite subsets of $\omega$. Hindman's Theorem \cite{nH74} says that 
for every finite partition of the set of positive natural numbers $\mathbb{N}=X_0\cupdot X_1\cupdot \dotsb \cupdot X_N$, there exists a sequence $\langle x_i \rangle_{i \in \omega}$ of positive natural numbers such that $\FS(\langle x_i \rangle_{i \in \omega} )\subseteq X_j$ for some $0\leq j \leq N$.

A Ramsey algebra is an algebraic structure which possesses the property analogous to that possesed by the semigroup $(\omega, +)$ as in Hindman's Theorem. The formal definition of Ramsey algebras was suggested by Carlson and recently introduced by this author \cite{wcT13, wcT13a}.
The notion of Ramsey algebras is motivated by the study of Ramsey spaces, introduced by Carlson \cite{tC88} in 1988.
He showed that some space of infinite sequences of variable words over a finite alphabet, endowed with an analogous Ellentuck topology is a Ramsey space. This result, known as Carlson's Theorem in \cite[XVIII,~\S4]{HS12}, implies many earlier Ramsey theoretic results including Hindman's Theorem, Ellentuck's Theorem, the dual Ellentuck Theorem~\cite{CS84}, the Galvin-Prikry Theorem \cite{GP73} and the Hales-Jewett Theorem \cite{HJ63}. Due to his abstract version of Ellentuck's Theorem \cite{eE74}, this space is Ramsey because the corresponding algebra of variable words is a Ramsey algebra.  This connection between Ramsey algebras and Ramsey spaces was addressed in \cite{wcT13}.

In 1975 Galvin and Glazer (see \cite{wC77} or \cite{nH79}) 
gave a simple proof of Hindman's Theorem by showing the existence of idempotent ultrafilters. These are exactly the idempotent elements of the semigroup $(\beta\mathbb{N}, +)$, where $+$ is the extension of addition on $\mathbb{N}$ to $\beta\mathbb{N}$, the Stone-\v{C}ech compactification of $\mathbb{N}$. As $(\beta\mathbb{N}, +)$ is a compact right topological semigroup, by the Ellis-Numakura Lemma, it has an idempotent element. Later in 1987, under Martin's Axiom, Hindman \cite{nH87} showed the existence of a strongly summable ultrafilter, that is, an idempotent ultrafilter $U$ with the stronger property that it is generated by sets of finite sums.
Blass and Hindman \cite{BH87} later showed that their existence is independent of $\mathsf{ZFC}$.

Carlson's algebras of variable words are interesting Ramsey algebras that are not semigroups. The collection of operations in each of these algebras is finite but can be arbitrarily large depending on the size of the underlying finite alphabet. The nice interplay among the operations in a Ramsey algebra of variable words may provide insight into the problem of formation of new Ramsey algebras from semigroups or other elementary Ramsey algebras, which remains wide open. A key feature in Carlson's proof is the construction of certain ultrafilters idempotent for every operation in the corresponding algebra. These ultrafilters in turn allow the construction of homogeneous sequences, generalizing Galvin-Glazer proof of Hindman's Theorem.
Furthermore, assuming Martin's Axiom, this author \cite{wcT13b} has shown that  every nondegenerate Ramsey algebra has nonprincipal strongly reductible ultrafilters, that is,  analogues of strongly summable ultrafilters. This led Carlson to propose the following open problem.
\begin{question}[Carlson]
Can the existence of idempotent ultrafilters for a Ramsey algebra be proven in $\mathsf{ZFC}$?
\end{question}

This paper is motivated by the work of reverse mathematicians. In this area, the logical strength of true statements that can be formalized using the language of second order arithmetic are determined. To do that, particularly, field of sets and ultrafilters have to be encoded as sets of natural numbers. Hence, countability is a major consideration. 
Therefore, a general framework is proposed so that the notion of idempotent ultrafilters can be dealt with in this countable setting. Our main result indicates that a corresponding positive assertion pertaining to  Carlson's problem may be provable under the theory of  second order arithmetic.

\section{Preliminaries}

To us an \emph{algebra} is a pair $(A, \mathcal{F})$, where $A$ is a nonempty set and $\mathcal{F}$ is a collection of operations on $A$, none of which is nullary. ``Subalgebra"
will have its natural meaning.

Tuples are defined inductively using ordered pairs. The $1$-tuple $(x)$ is defined to be $x$. The $2$-tuple $(x,y)$ is the ordered pair of $x$ and $y$. If $n \geq 2$, then $(x_0,\dotsc,x_n)$ is the ordered pair $((x_0,\dotsc,x_{n-1}),x_n)$. 

The set of infinite and finite sequences in $A$ are denoted by ${^\omega}\!A$ and ${^{<\omega}}\!A$ res\-pectively.
Suppose $\vec{a}$ is an infinite sequence $\langle a_0, a_1,a_2,\dotsc\rangle$. For each $n \geq 1$,  let $\vec{a}\!\upharpoonright \! n$ denote the initial segment of $\vec{a}$ of length $n$, namely $\langle a_0, a_1,\dotsc, a_{n-1}\rangle$.  For every $n \in \omega$, the cut-off sequence $\langle a_n, a_{n+1},a_{n+2},\dotsc\rangle$ is denoted by $\vec{a}-n$. 
If $\vec{b}$ is a finite sequence $\langle b_0, b_1,\dotsc,b_{n-1}\rangle$, then $\vert \vec{b}\vert$ is the length of $\vec{b}$, and the concatenation $\vec{b}\ast \vec{a}$ of $\vec{b}$ and $\vec{a}$ is $\langle b_0, b_1,\dotsc,b_{n-1}, a_0, a_1,a_2,\dotsc\rangle$.

Suppose $f$ is an $n$-ary operation on a set $A$ and $\vec{a}$ is a finite sequence in $A$. We will write $f(\vec{a})$ for $f(\vec{a}(0), \dotsc,\vec{a}(n-1))$ for notational convenience, where it is implicitly assumed that the length of $\vec{a}$ is $n$.  Similarly, if $\bar{x}$---a symbol with a bar over it indicate a list---is a list of variables, we may write $f(\bar{x})$ and it is understood that the list $\bar{x}$ consists of $n$ variables.

A \emph{field of sets} over a set $S$ is a nonempty collection $\mathcal{A}$ of subsets of  $S$ that is closed under finite unions, finite intersections and complementation (relative to $S$). For our purposes, we will always assume that every field of sets contains all the singletons.

Suppose $\mathcal{A}$ is a field of sets over $S$. 
An \emph{ultrafilter} $U$ on $\mathcal{A}$ is a subset of $\mathcal{A}$ closed under intersections such that $S \in U$, $\emptyset \notin U$ and for every $X \in \mathcal{A}$, either $X\in U$ or $ X^c\in U$. Note that if $\mathcal{A}$ is the power set of $S$, an ultrafilter on $\mathcal{A}$ is the same as what is typically called an ultrafilter on $S$.
The set of ultrafilters on $S$ is denoted by $\beta S$.
The \emph{principal ultrafilter} $U$ on $\mathcal{A}$ \emph{generated} by an element $a$ in $S$ is the ultrafilter $\{\,X \in \mathcal{A}\mid a \in X\,\}$. 
 An ultrafilter $U$ on $\mathcal{A}$ is \emph{nonprincipal} if{f} it is not principal.
 Note that an ultrafilter $U$ on $\mathcal{A}$ over $S$ is nonprincipal if and only if every $X\in U$ is infinite. 

As an example, consider the collection $\mathcal{A}$ of all finite and cofinite subsets of a set $S$. Clearly, $\mathcal{A}$ is a field of sets over $S$. Additionally, 
the collection of all cofinite subsets of $S$ is the unique nonprincipal ultrafilter on $\mathcal{A}$.

Suppose $\{A_i\}_{i \in I}$ is an indexed collection of distinct sets. An $n$-ary operation on $\{A_i\}_{i \in I}$ is a function with domain $A_{i_1}\times \dotsb \times A_{i_n}$ for some $i_1,\dotsc, i_n\in I$ and codomain $A_j$ for some $j\in I$.
Suppose $\mathcal{F}$ is a collection of operations on  $\{ A_i  \}_{i \in I}$. The structure $(\{ A_i  \}_{i \in I}, \mathcal{F})$ is known as a \emph{heterogeneous algebra} in the literature (see \cite{BL70}).
Suppose $B_i \subseteq A_i$ for each $i\in I$. We say that $\{ B_i  \}_{i \in I}$ is closed under $\mathcal{F}$ if{f} for every $f\colon A_{i_1}\times \dotsb \times A_{i_n}\rightarrow A_{j}$ in $\mathcal{F}$, we have $f(x_1, \dotsc,x_n)\in B_j$ whenever $(x_1, \dotsc, x_n)\in B_{i_1}\times \dotsb \times B_{i_n}$.

The following lemma should be nothing new to the universal algebraists, logicians and perhaps mathematicians in general, especially for the case where $I$ is a singleton. Nevertheless, to our knowledge, the result in this form does not appear in the literature. It will be needed later and so we state it without proof. 

\begin{theorem}\label{2111}
Suppose $(\{ A_i  \}_{i \in I}, \mathcal{F})$ is a heterogeneous algebra, where $\mathcal{F}$ is countable. If $X_i$ is a countable subset of $A_i$ for each $i \in I$, then there exists a countable superset $B_i$ of $X_i$ for every $i \in I$ such that $\{ B_i  \}_{i \in I}$ is closed under $\mathcal{F}$.
\end{theorem}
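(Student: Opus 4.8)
The plan is to build the sets $B_i$ by the standard stagewise generation of a subalgebra, iterated $\omega$ times, and then to check that countability survives the construction. Because $\mathcal{F}$ is countable and each of its operations has finite (and nonzero) arity, a single simultaneous application of all operations to a countable collection of inputs yields only countably many new elements, which is precisely what the iteration requires. Concretely, for each $i \in I$ I would define an increasing chain $X_i = B_i^{(0)} \subseteq B_i^{(1)} \subseteq \dotsb$ of subsets of $A_i$ by setting, at the successor stage,
\[
B_j^{(n+1)} = B_j^{(n)} \cup \bigl\{\, f(\vec{x}) \mid f \in \mathcal{F},\ f\colon A_{i_1}\times\dotsb\times A_{i_m}\to A_j,\ \vec{x}\in B_{i_1}^{(n)}\times\dotsb\times B_{i_m}^{(n)} \,\bigr\},
\]
and then putting $B_i = \bigcup_{n\in\omega} B_i^{(n)}$. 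Plainly $X_i \subseteq B_i$.

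Countability I would prove by induction on $n$. The base case $B_i^{(0)} = X_i$ is countable by hypothesis. At stage $n+1$, the new elements landing in $B_j$ form a union, over the countably many operations $f \in \mathcal{F}$ with codomain $A_j$, of the images $f[B_{i_1}^{(n)}\times\dotsb\times B_{i_m}^{(n)}]$; each such image is the image of a finite product of countable sets, hence countable, and a countable union of countable sets is countable. Thus every $B_i^{(n)}$ is countable, and so is $B_i = \bigcup_n B_i^{(n)}$. I would stress that this argument is local to each sort $i$, so no restriction on the cardinality of the index set $I$ is needed: sorts never occurring as a codomain simply satisfy $B_i = X_i$.

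For closure, suppose $f\colon A_{i_1}\times\dotsb\times A_{i_m}\to A_j$ lies in $\mathcal{F}$ and $(x_1,\dotsc,x_m)\in B_{i_1}\times\dotsb\times B_{i_m}$. Each $x_\ell$ enters at some finite stage, say $x_\ell\in B_{i_\ell}^{(n_\ell)}$; taking $N=\max_{1\le\ell\le m} n_\ell$, which exists precisely because the arity $m$ is finite, the monotonicity of the chain gives $x_\ell\in B_{i_\ell}^{(N)}$ for all $\ell$, whence $f(x_1,\dotsc,x_m)\in B_j^{(N+1)}\subseteq B_j$. This shows $\{B_i\}_{i\in I}$ is closed under $\mathcal{F}$.

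The only load-bearing points, and hence where I would be most careful, are the two finiteness features of a heterogeneous algebra: the countability of $\mathcal{F}$, which keeps each stagewise union countable, and the finite arity of each operation, which both bounds each individual image and licenses the finite maximum in the closure step. The iteration through $\omega$, rather than stopping at a finite stage, is what guarantees closure, while these finiteness features are exactly what prevent the cardinality from escaping beyond countable.
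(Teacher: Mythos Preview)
Your argument is correct and is precisely the standard stagewise closure construction one would expect here. The paper itself does not supply a proof of this theorem; it explicitly states the result without proof, remarking that it ``should be nothing new to the universal algebraists, logicians and perhaps mathematicians in general.'' Your writeup fills in exactly what the paper omits, and your identification of the two load-bearing hypotheses (countability of $\mathcal{F}$ for the cardinality bound at each stage, finite arity for both the product bound and the existence of $N=\max_\ell n_\ell$ in the closure verification) is accurate.
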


\section{Ramsey Algebras and Idempotent Ultrafilters}\label{2706b}

A few terminologies are needed before we can introduce the notion of Ramsey algebra.
The following is a special type of composition of operations, to the author's knowledge, introduced by Carlson in  \cite{tC88}.
In fact, Carlson's definition is more general because it is defined for any heterogenoeus algebra.


\begin{definition}\label{0601a}
Suppose $(A, \mathcal{F})$ is an algebra. An operation $f \colon A^m\rightarrow A$ is an \emph{orderly composition} of $\mathcal{F}$ if{f} there exist  $g, h_1, \dotsc, h_n \in \mathcal{F}$ such that $f( \bar{x}_1, \dotsc, \bar{x}_n)=g(h_1(\bar{x}_1), \dotsc, h_n(\bar{x}_n))$.
We say that $\mathcal{F}$ is \emph{closed under orderly composition} if{f} $f \in \mathcal{F}$ whenever $f$ is an orderly composition of $\mathcal{F}$. The collection of \emph{orderly terms} over $\mathcal{F}$ is  the smallest collection of operations on $A$ that includes $\mathcal{F}$, contains the identity function on $A$ and is closed under orderly composition.
\end{definition}


\begin{definition}\label{0524c}
Suppose $(A, \mathcal{F})$ is an algebra and $\vec{a}, \vec{b}$ are infinite sequences in $A$.
We say that $\vec{a}$ is a \emph{reduction} of $\vec{b}$ with respect to $\mathcal{F}$, and write $\vec{a} \leq_{\mathcal{F}} \vec{b}$ if{f} there are finite sequences $\vec{b}_n$ and orderly terms $f_n$ over $\mathcal{F}$ for all
$n \in \omega$ such that $\vec{b}_0 \ast \vec{b}_1 \ast \vec{b}_2 \ast \dotsb$ is a subsequence of $\vec{b}$ and $\vec{a}(n)=f_n(\vec{b}_n)$ for all $n \in \omega$. 
\end{definition}

The following is an analogous definition of reduction defined for finite sequences.

\begin{definition}
Suppose $(A, \mathcal{F})$ is an algebra and $\vec{a}, \vec{b}$ are finite sequences in $A$. We say that $\vec{a}$ is a \emph{reduction} of $\vec{b}$ with respect to $\mathcal{F}$, and write $\vec{a} \unlhd_{\mathcal{F}} \vec{b}$  if{f} there are finite sequences $\vec{b}_n$  and orderly terms $f_n$ over $\mathcal{F}$ for $n< \vert \vec{a}\vert$ such that $\vec{b}_0 \ast \vec{b}_1 \ast \dotsb\ast \vec{b}_{\vert \vec{a}\vert -1}$ is a subsequence of $\vec{b}$ and $\vec{a}= \langle f_0(\vec{b}_0), f_1(\vec{b}_1), \dotsc,f_{\vert \vec{a}\vert -1}(\vec{b}_{\vert \vec{a}\vert -1})\rangle$.
\end{definition}

It is easy to check that $\leq_{\mathcal{F}}$ is a pre-partial ordering on ${^\omega}\!A$ and that $\unlhd_{\mathcal{F}}$ is a pre-partial ordering on ${^{<\omega}}\!A$.
 

\begin{definition}
Suppose $(A, \mathcal{F})$ is an algebra and $\vec{b}$ is an infinite sequence in $A$. An element $a$ of $A$ is a \emph{finite reduction} of $\vec{b}$ with respect to $\mathcal{F}$ if{f} $a$ is equal to $f(\vec{b}_0)$ for some orderly term $f$ over  $\mathcal{F}$  and some finite subsequence $\vec{b}_0$ of $\vec{b}$. The set of all finite reductions of $\vec{b}$ with respect to $\mathcal{F}$ is denoted by $\FR_{\mathcal{F}}(\vec{b})$.
\end{definition}


Our definitions of $\leq_{\mathcal{F}}$ and $\unlhd_{\mathcal{F}}$ are equivalent to a special case of the one given in \cite{tC88}, where the collection of operations contains all projections.
Our choice is driven by Hindman's Theorem; under our definition, $\FR_{\{+\}}(\vec{b})=\FS(\vec{b})$, where $\vec{b} \in  {^\omega}{\mathbb{N}}$.

Note that if $\vec{a} \leq_{\mathcal{F}} \vec{b}$, then $\FR_{\mathcal{F}}(\vec{a})\subseteq \FR_{\mathcal{F}}(\vec{b})$.

\begin{definition}
Suppose $(A, \mathcal{F})$ is an algebra. We say that $(A, \mathcal{F})$ is a \emph{Ramsey algebra} if{f}
for every $\vec{a}\in {^\omega}\!A$ and $X \subseteq A$, there exists $\vec{b} \leq_{\mathcal{F}} \vec{a}$ such that $\FR_{\mathcal{F}}(\vec{b})$ is either contained in or disjoint from $X$. 
\end{definition}

It is a consequence of Hindman's Theorem that every semigroup is a Ramsey algebra (see \cite[V,~\S2]{HS12}).

Suppose $(A, \mathcal{F})$ is an algebra such that for every $\vec{a} \in {^\omega}\!A$, there exists  $\vec{b} \leq_{\mathcal{F}} \vec{a}$ such that  $\vert \FR_{\mathcal{F}}(\vec{b}) \vert=1$. Then $(A, \mathcal{F})$ is trivially Ramsey, and we say that it is a \emph{degenerate Ramsey algebra}. 

\begin{theorem}[\cite{wcT13b}]\label{130613b}
Suppose $(A, \mathcal{F})$ is a nondegenerate Ramsey algebra. Then there exists $\vec{a}\in {^\omega}\! A$ such that $\FR_{\mathcal{F}}(\vec{b})$ is infinite whenever $\vec{b}\leq_{\mathcal{F}} \vec{a}$.
\end{theorem}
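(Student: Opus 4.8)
The plan is to take the witnessing sequence $\vec{a}$ to be a sequence that certifies nondegeneracy and to show that the Ramsey property alone forbids any of its reductions from having a finite set of finite reductions; no fusion or diagonalization is needed.

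First I would unwind the definitions. Since $(A,\mathcal{F})$ is a Ramsey algebra that is not degenerate, negating the defining clause of degeneracy produces a sequence $\vec{a}_0 \in {^\omega}\!A$ such that $\vert \FR_{\mathcal{F}}(\vec{b})\vert \neq 1$ for every $\vec{b} \leq_{\mathcal{F}} \vec{a}_0$. Because the identity function is an orderly term, every entry of $\vec{b}$ lies in $\FR_{\mathcal{F}}(\vec{b})$, so this set is never empty; hence in fact $\vert \FR_{\mathcal{F}}(\vec{b})\vert \geq 2$ for all $\vec{b} \leq_{\mathcal{F}} \vec{a}_0$. I claim that $\vec{a}=\vec{a}_0$ already witnesses the theorem.

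The heart of the argument is an induction on $k \geq 1$ proving that no $\vec{b} \leq_{\mathcal{F}} \vec{a}_0$ satisfies $1 \leq \vert \FR_{\mathcal{F}}(\vec{b})\vert \leq k$. The base case $k=1$ is exactly the property of $\vec{a}_0$ recorded above. For the inductive step, suppose toward a contradiction that some $\vec{b} \leq_{\mathcal{F}} \vec{a}_0$ has $\vert \FR_{\mathcal{F}}(\vec{b})\vert = k+1$; fix $y \in \FR_{\mathcal{F}}(\vec{b})$ and apply the Ramsey property to $\vec{b}$ with $X=\{y\}$ to obtain $\vec{c} \leq_{\mathcal{F}} \vec{b}$ with $\FR_{\mathcal{F}}(\vec{c})$ either contained in or disjoint from $\{y\}$. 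Since $\leq_{\mathcal{F}}$ is a pre-partial ordering it is transitive, so $\vec{c} \leq_{\mathcal{F}} \vec{a}_0$, and since $\vec{c} \leq_{\mathcal{F}} \vec{b}$ the stated monotonicity of $\FR_{\mathcal{F}}$ gives $\FR_{\mathcal{F}}(\vec{c}) \subseteq \FR_{\mathcal{F}}(\vec{b})$. In the first case $\FR_{\mathcal{F}}(\vec{c})=\{y\}$ has size $1$; in the second, $\FR_{\mathcal{F}}(\vec{c}) \subseteq \FR_{\mathcal{F}}(\vec{b})\setminus\{y\}$ has size at most $k$. Either way $\vec{c}$ is a reduction of $\vec{a}_0$ with $1 \leq \vert \FR_{\mathcal{F}}(\vec{c})\vert \leq k$, contradicting the induction hypothesis.

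Having thus ruled out every finite cardinality, I would conclude that $\FR_{\mathcal{F}}(\vec{b})$ is infinite for every $\vec{b} \leq_{\mathcal{F}} \vec{a}_0$, which completes the proof. I expect the only genuinely delicate point to be the conceptual one of recognizing that the nondegeneracy witness itself already does the job, so that one needs no construction ranging over infinitely many reductions at once; once that is seen, the ``peeling off one element at a time'' via the Ramsey property, combined with transitivity of $\leq_{\mathcal{F}}$ and the nonemptiness of $\FR_{\mathcal{F}}$, renders each step routine.
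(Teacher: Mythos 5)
Your argument is correct. Note that the paper itself gives no proof of this theorem---it is imported from \cite{wcT13b}---so there is nothing internal to compare against; but every ingredient you use is available and used correctly: nondegeneracy of a Ramsey algebra literally negates to the existence of an $\vec{a}_0$ with $\vert \FR_{\mathcal{F}}(\vec{b})\vert \neq 1$ for all $\vec{b}\leq_{\mathcal{F}}\vec{a}_0$; the identity function is by definition an orderly term, so $\FR_{\mathcal{F}}(\vec{b})\neq\emptyset$ always; the paper explicitly records both the transitivity of $\leq_{\mathcal{F}}$ (as a pre-partial order) and the monotonicity $\vec{c}\leq_{\mathcal{F}}\vec{b}\Rightarrow \FR_{\mathcal{F}}(\vec{c})\subseteq\FR_{\mathcal{F}}(\vec{b})$; and the Ramsey property applied to the singleton $\{y\}$ does exactly the ``peeling'' you describe, with the contained case killed by nondegeneracy and the disjoint case killed by the induction hypothesis. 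Your observation that the nondegeneracy witness itself already works, so that no fusion along an infinite descending chain (as in Lemma~\ref{0729}) is needed, is the right conceptual point: the statement is a per-reduction cardinality claim, not a diagonalization, and a minimal-counterexample or induction-on-cardinality argument suffices. One could phrase the same idea slightly more compactly by choosing, among all $\vec{c}\leq_{\mathcal{F}}\vec{a}_0$ with $\FR_{\mathcal{F}}(\vec{c})$ finite, one of minimal cardinality and deriving a contradiction in one step, but this is the same proof.
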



For the remaining of this section, some definitions and known results that will not be needed are briefly presented for the sake of comparison with our work in the new framework.

\begin{definition}\label{1207g}
Assume $U$ is an ultrafilter on $A$ and $f \colon A \rightarrow B$. Let $f_{\ast}(U)$ be the ultrafilter on $B$ defined by $\{\,X \subseteq B \mid f^{-1}[X] \in U \,\}$.
\end{definition}


\begin{definition}\label{0923a}
Assume $U$ and $V$ are ultrafilters on sets $A$ and $B$ respectively. $U \times V$ is the ultrafilter on $A\times B$ defined by
$$\{\,X \subseteq A\times B \mid \{\, a \in A \mid \{\,b\in B \mid (a,b) \in X \,\}\in V \,\}\in U\,\}$$
\end{definition}

Suppose $U_i$ is an ultrafilter on $A_i$ for each $i=1, \dotsc,n$ and $f \colon A_1 \times \dotsb \times A_n \rightarrow B$. We will write $f(U_1, \dotsc, U_n)$ for $f(U_1\times \dotsb \times U_n)$. 

\begin{definition}
Suppose $\mathcal{F}$ is a collection of operations  on a set $A$. An ultrafilter $U$ on $A$ is said to be \emph{idempotent} for $\mathcal{F}$ if{f} $f_*(U, \dotsc, U)=U$ whenever $f \in \mathcal{F}$.
\end{definition}

\begin{theorem}[\cite{tC88}]\label{1007a}
Suppose $\mathcal{F}$ is a collection of operations on a set $A$. If $U$ is an ultrafilter on $A$ idempotent for $\mathcal{F}$, then it is idempotent for the collection of orderly terms over $\mathcal{F}$.
\end{theorem}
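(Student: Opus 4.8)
The plan is to argue by induction on the inductive generation of the collection of orderly terms over $\mathcal{F}$. Write $\mathcal{T}$ for that collection and let $P(t)$ abbreviate the assertion $t_*(U, \dotsc, U) = U$. Since $\mathcal{T}$ is by definition the smallest collection of operations on $A$ that contains $\mathcal{F}$ and the identity function and is closed under orderly composition, it suffices to check that $\{\, t \in \mathcal{T} \mid P(t) \,\}$ enjoys all three of these closure properties, whence it must equal $\mathcal{T}$. The base cases are immediate: for $t = \mathrm{id}_A$ we have $\mathrm{id}_A^{-1}[X] = X$, so $(\mathrm{id}_A)_*(U) = U$ by Definition \ref{1207g}; and for $t \in \mathcal{F}$ the statement $P(t)$ is precisely the hypothesis that $U$ is idempotent for $\mathcal{F}$.

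For the inductive step, suppose $t(\bar{x}_1, \dotsc, \bar{x}_n) = g(h_1(\bar{x}_1), \dotsc, h_n(\bar{x}_n))$ is an orderly composition of orderly terms $g, h_1, \dotsc, h_n$, each of which satisfies $P$ by the induction hypothesis. Say $h_i \colon A^{k_i} \to A$ and $g \colon A^n \to A$, so that $t \colon A^{k} \to A$ with $k = k_1 + \dotsb + k_n$. The key observation is that $t$ factors as $t = g \circ H$, where $H \colon A^{k_1} \times \dotsb \times A^{k_n} \to A^n$ is the map $H(\bar{x}_1, \dotsc, \bar{x}_n) = (h_1(\bar{x}_1), \dotsc, h_n(\bar{x}_n))$. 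I would then invoke three facts about ultrafilters, each proved by unwinding Definitions \ref{1207g} and \ref{0923a}: first, functoriality $(g \circ H)_*(W) = g_*(H_*(W))$, which follows instantly since $(g \circ H)^{-1}[Y] = H^{-1}[g^{-1}[Y]]$; second, a Fubini-type compatibility of products with pushforwards, $H_*(V_1 \times \dotsb \times V_n) = (h_1)_*(V_1) \times \dotsb \times (h_n)_*(V_n)$ for ultrafilters $V_i$ on $A^{k_i}$; and third, associativity of the ultrafilter product, which lets us identify the $k$-fold product ultrafilter on $A^k$ with $U^{(k_1)} \times \dotsb \times U^{(k_n)}$, where $U^{(k_i)}$ denotes the $k_i$-fold product of $U$ on $A^{k_i}$.

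Granting these, the computation is a short chain:
\[
t_*(U, \dotsc, U) = g_*\bigl(H_*(U^{(k_1)} \times \dotsb \times U^{(k_n)})\bigr) = g_*\bigl((h_1)_*(U^{(k_1)}) \times \dotsb \times (h_n)_*(U^{(k_n)})\bigr) = g_*(U, \dotsc, U) = U,
\]
where the second equality is the Fubini-type identity, the third uses $P(h_i)$ (each $(h_i)_*(U, \dotsc, U) = U$), and the last uses $P(g)$. This completes the induction. I expect the main obstacle to be bookkeeping rather than conceptual: verifying the Fubini-type identity and the associativity of the iterated product ultrafilter requires careful manipulation of the nested ``iterated section'' conditions in Definition \ref{0923a}, and one must be scrupulous about how the tuple-association convention for $A^k$ matches the grouping $A^{k_1} \times \dotsb \times A^{k_n}$ imposed by the orderly composition, so that $H$ and the reassociation bijection are genuinely compatible.
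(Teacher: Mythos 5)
Your proposal is correct and takes essentially the same route as the paper: Theorem~\ref{1007a} itself is cited from Carlson without proof, but the paper's proof of its framework analogue, Theorem~\ref{1208h}, proceeds exactly as you do---induction on the generation of orderly terms, with the inductive step reduced to showing $f_* (U,\dotsc,U)=g_*\bigl((h_1)_*(U,\dotsc,U),\dotsc,(h_m)_*(U,\dotsc,U)\bigr)$ via a Fubini-type identity for pushforwards of product ultrafilters (Lemma~\ref{2606c}), which packages together your second and third auxiliary facts. The bookkeeping you flag about tuple association is handled there by the paper's left-nested tuple convention and an induction on $m$.
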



Suppose $(A,f)$ is a semigroup.  Then $(\beta A, f_\ast)$ is a compact right topological semigroup. By the Ellis-Nakamura Lemma, there is an ultrafilter $U$ on $A$ idempotent for $f$.
An extensive exposition to the theory of compact right topological semigroup of this form can be found in \cite{HS12}.


The following is a generalization of strongly summable ultrafilter.

\begin{definition}[\cite{wcT13b}]\label{0207a}
Suppose $\mathcal{F}$ is a collection of operations on a set $A$ and $U$ is an ultrafilter on $A$. We say that $U$ is \emph{strongly reductible} for $\mathcal{F}$ if{f} for every $X \in U$, there exists $\vec{a} \in {^\omega}\!A$ such that $\FR_{\mathcal{F}}(\vec{a}) \subseteq X$ and $\FR_{\mathcal{F}}(\vec{a}-n) \in U$ for all $n \in \omega$.
\end{definition}

Every ultrafilter strongly reductible for $\mathcal{F}$ is necessarily idempotent for $\mathcal{F}$.

\begin{theorem}[\cite{wcT13b}]\label{0707b}
Assume Martin's Axiom. Suppose $(\omega, \mathcal{F})$ is a nondegenerate Ramsey algebra. Then there exists a nonprincipal strongly reductible ultrafilter.
\end{theorem}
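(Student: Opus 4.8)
The plan is to build, by transfinite recursion of length $\mathfrak{c}$, a $\leq_{\mathcal{F}}$-decreasing chain $\langle \vec{a}_\alpha \rangle_{\alpha < \mathfrak{c}}$ of infinite sequences in $\omega$ whose sets of finite reductions generate the desired ultrafilter. First I would fix an enumeration $\langle X_\alpha \rangle_{\alpha < \mathfrak{c}}$ of $\mathcal{P}(\omega)$ and, invoking Theorem~\ref{130613b}, choose $\vec{a}_0$ so that $\FR_{\mathcal{F}}(\vec{b})$ is infinite for every $\vec{b} \leq_{\mathcal{F}} \vec{a}_0$; working below $\vec{a}_0$ throughout guarantees that every set of finite reductions in sight is infinite, which is what will yield nonprincipality. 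The target ultrafilter is then $U = \{\, X \subseteq \omega \mid \FR_{\mathcal{F}}(\vec{a}_\alpha) \subseteq X \text{ for some } \alpha < \mathfrak{c} \,\}$, and the whole construction is arranged so that this $U$ is well defined and has the two required features.

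At a successor stage, given $\vec{a}_\alpha$, the defining property of a Ramsey algebra supplies $\vec{a}_{\alpha+1} \leq_{\mathcal{F}} \vec{a}_\alpha$ with $\FR_{\mathcal{F}}(\vec{a}_{\alpha+1})$ either contained in or disjoint from $X_\alpha$; this splitting is exactly what forces $U$ to be an ultrafilter, while the decreasing chain keeps the generators nested (so $U$ is a filter) and nonprincipality is immediate from the base step. The extra clause in strong reductibility then comes essentially for free from the chain structure, via the observation that $\vec{b} \leq_{\mathcal{F}} \vec{a}$ implies that for every $m$ there is $n$ with $\vec{b}-n \leq_{\mathcal{F}} \vec{a}-m$, since after discarding finitely many entries of $\vec{b}$ every remaining entry is an orderly term applied to a block of $\vec{a}$ lying beyond position $m$. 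Given $X \in U$, I fix $\alpha$ with $\FR_{\mathcal{F}}(\vec{a}_\alpha) \subseteq X$ and put $\vec{a} = \vec{a}_\alpha$. For each $n$, applying the splitting property to $\FR_{\mathcal{F}}(\vec{a}_\alpha - n)$ and passing to an index $\gamma \geq \alpha$ large enough to dominate the witness, the tail observation together with the infiniteness of all reduction sets rules out the disjoint alternative; hence $\FR_{\mathcal{F}}(\vec{a}_\alpha - n) \in U$, exactly as required.

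The crux, and the only place Martin's Axiom enters, is the limit stage: given a $\leq_{\mathcal{F}}$-decreasing chain $\langle \vec{a}_\beta \rangle_{\beta < \lambda}$ with $\lambda < \mathfrak{c}$, one must produce a single $\vec{a}_\lambda$ with $\vec{a}_\lambda \leq_{\mathcal{F}} \vec{a}_\beta$ for all $\beta < \lambda$. Because the chain is decreasing, every finite subfamily already has a common reduction, so the family is centered, and I would obtain $\vec{a}_\lambda$ as a generic object for the forcing $Q$ whose conditions are pairs $(\vec{s}, F)$, where $\vec{s}$ is a finite sequence with $\vec{s} \unlhd_{\mathcal{F}} \vec{a}_\beta \upharpoonright k_\beta$ for some $k_\beta$ for each $\beta$ in the finite promise set $F \subseteq \lambda$, ordered by end-extension of $\vec{s}$ that respects all promises together with growth of $F$. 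Grouping conditions by their finite first coordinate exhibits $Q$ as $\sigma$-centered, so the relevant instance of Martin's Axiom applies to the $\max(\aleph_0, |\lambda|) < \mathfrak{c}$ many families $D_k = \{(\vec{s},F) \mid |\vec{s}| \geq k\}$ and $E_\beta = \{(\vec{s},F) \mid \beta \in F\}$, and the resulting filter yields an infinite $\vec{a}_\lambda$ that reduces every $\vec{a}_\beta$.

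I expect the main obstacle to be precisely this amalgamation lemma: establishing that the generic object is a genuine $\leq_{\mathcal{F}}$-reduction of each $\vec{a}_\beta$ and not merely an almost-reduction. The delicate point is the density of the promise-extending sets $E_\beta$, where the combinatorics of orderly terms and the decreasing structure of $\langle \vec{a}_\beta \rangle$ must cooperate so that any condition can be extended to absorb a new promise without spoiling the finitely many it already carries; here one has to use that a sufficiently deep $\vec{a}_\gamma$ is simultaneously a reduction of all shallower members of the chain. Once this lemma is in place the recursion goes through unobstructed, and the ultrafilter $U$ defined above is the desired nonprincipal strongly reductible ultrafilter.
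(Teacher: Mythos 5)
The paper does not prove Theorem~\ref{0707b}; it imports it from \cite{wcT13b}. Your overall strategy --- a transfinite $\leq_{\mathcal{F}}$-decreasing recursion of length $\mathfrak{c}$, deciding one subset of $\omega$ per successor stage via Ramseyness, with Martin's Axiom for $\sigma$-centered posets supplying lower bounds at limit stages --- is the standard generalization of Hindman's construction of strongly summable ultrafilters and is surely the route taken in the cited source. The base step via Theorem~\ref{130613b}, the successor step, and the derivation of strong reductibility from the tail observation are all sound in outline.

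There is, however, a genuine gap at the limit stage, and it is exactly the point you flag but do not resolve. For the poset you describe, the sets $E_\beta$ are \emph{not} dense: a condition $(\vec{s},F)$ with $\beta\notin F$ has a stem $\vec{s}$ that was built as a reduction of initial segments of the $\vec{a}_\gamma$ for $\gamma\in F$ only, and there is no reason $\vec{s}\unlhd_{\mathcal{F}}\vec{a}_\beta\!\upharpoonright\! k$ for any $k$; so no extension of $(\vec{s},F)$ can lie in $E_\beta$ as you have defined it. The standard repair is to let a newly added promise $\beta$ constrain only the part of the stem added \emph{after} $\beta$ enters $F$, but then the generic object satisfies only $\vec{a}_\lambda - n_\beta \leq_{\mathcal{F}} \vec{a}_\beta$ for some $n_\beta$ depending on $\beta$, i.e.\ an almost-reduction. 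This is not a defect to be engineered away --- note that even Lemma~\ref{0729}, for countable chains, concludes only $\vec{b}-n\leq_{\mathcal{F}}\vec{a}_n$ --- so aiming for a genuine reduction of every $\vec{a}_\beta$ is both unattainable in general and unnecessary. The correct move is to run the entire recursion in the almost-reduction quasi-order: the chain invariant becomes ``for all $\alpha<\alpha'$ there is $n$ with $\vec{a}_{\alpha'}-n\leq_{\mathcal{F}}\vec{a}_\alpha$,'' the ultrafilter must be defined as $U=\{\,X\mid \FR_{\mathcal{F}}(\vec{a}_\alpha-n)\subseteq X\text{ for some }\alpha,n\,\}$ (your definition without tails need not even be a filter once limits are passed), and the verification of strong reductibility then goes through by the argument you sketch, using your tail observation together with the infiniteness of all reduction sets below $\vec{a}_0$. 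One also needs the conditions to carry bookkeeping (how far into each promised $\vec{a}_\beta$ the stem's decomposition has reached) so that end-extensions use blocks beyond the material already consumed; without this the ``respects all promises'' clause is not well defined and $\sigma$-centeredness has not actually been checked. As written, the proof is therefore incomplete at its one load-bearing step.
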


\section{framework}

In this section, a framework will be proposed to study idempotent ultrafilters in some general setting.  We begin by generalizing Definition~\ref{1207g} and \ref{0923a} to field of sets. Later, under the new framework, results analogous to Theorem~\ref{1007a} and \ref{0707b} will be presented.

\begin{proposition}\label{1006a}
Suppose $\mathcal{A}$ is a field of sets over a set $S$ and $U$ is an ultrafilter on $\mathcal{A}$. Suppose $T$ is a set and $f \colon S\rightarrow T$.
If $\mathcal{B}$ is a field of sets over $T$ such that $f^{-1}[X]\in \mathcal{A}$ for all $X\in \mathcal{B}$, then the collection $\{\, X\in \mathcal{B}\mid f^{-1}[X]\in U\,\}$
is an ultrafilter on $\mathcal{B}$.
\end{proposition}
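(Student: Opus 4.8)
The plan is to verify directly that $V := \{\, X \in \mathcal{B} \mid f^{-1}[X] \in U \,\}$ satisfies each clause in the definition of an ultrafilter on $\mathcal{B}$. The whole argument rests on the elementary fact that taking preimages under $f$ commutes with the Boolean operations: for $X, Y \subseteq T$ we have $f^{-1}[X \cap Y] = f^{-1}[X] \cap f^{-1}[Y]$, $f^{-1}[X^c] = (f^{-1}[X])^c$ (complement relative to $S$), $f^{-1}[T] = S$, and $f^{-1}[\emptyset] = \emptyset$. The hypothesis that $f^{-1}[X] \in \mathcal{A}$ for every $X \in \mathcal{B}$ is exactly what makes the condition ``$f^{-1}[X] \in U$'' meaningful, so I would keep it in view throughout.

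First I would check the two boundary conditions. Since $f^{-1}[T] = S$ and $S \in U$ (because $U$ is an ultrafilter on $\mathcal{A}$), we get $T \in V$; and since $f^{-1}[\emptyset] = \emptyset \notin U$, we get $\emptyset \notin V$. Next, for closure under intersection, I would take $X, Y \in V$, note that $X \cap Y \in \mathcal{B}$ because $\mathcal{B}$ is a field of sets, and compute $f^{-1}[X \cap Y] = f^{-1}[X] \cap f^{-1}[Y]$, which lies in $U$ since $U$ is closed under intersections; hence $X \cap Y \in V$.

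Finally, for the dichotomy, I would fix $X \in \mathcal{B}$ and use the hypothesis to conclude $f^{-1}[X] \in \mathcal{A}$. Since $U$ is an ultrafilter on $\mathcal{A}$, either $f^{-1}[X] \in U$ or $(f^{-1}[X])^c \in U$. In the first case $X \in V$; in the second, using $(f^{-1}[X])^c = f^{-1}[X^c]$ together with $X^c \in \mathcal{B}$, we obtain $X^c \in V$. This exhausts the axioms.

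Since every step is a one-line verification, I do not anticipate any genuine obstacle. The only point deserving a moment's attention — and the sole reason the field-of-sets hypothesis is stated in this form — is to ensure that all the sets whose $U$-membership is being tested, namely $f^{-1}[X \cap Y]$, $f^{-1}[X]$, and $(f^{-1}[X])^c$, actually belong to $\mathcal{A}$ so that $U$ can be applied to them; this is guaranteed by the assumption $f^{-1}[X] \in \mathcal{A}$ combined with closure of $\mathcal{A}$ under the Boolean operations.
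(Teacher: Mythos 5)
Your verification is correct and is precisely the routine check the paper dismisses as ``Straightforward'': preimages commute with the Boolean operations, and the hypothesis $f^{-1}[X]\in\mathcal{A}$ ensures the ultrafilter dichotomy on $\mathcal{A}$ can be applied to each $f^{-1}[X]$. There is nothing to add or correct.
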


\begin{proof}
Straightforward.
\end{proof}

\begin{remark}
Consider the special case where $\mathcal{A}$ and $\mathcal{B}$ are the power sets $\mathcal{P}(S)$ and $\mathcal{P}(T)$. For every function $f \colon S\rightarrow T$,
it is automatic that $f^{-1}[X]\in \mathcal{P}(S)$ for all $X\in \mathcal{P}(T)$. In this case, the ultrafilter $\{\, X\in \mathcal{P}(T)\mid f^{-1}[X]\in U\,\}$ equals $f_*(U)$.
\end{remark}

\begin{theorem}\label{0915a}
Suppose $\mathcal{A}$ is a field of sets over $S$ and $\mathcal{B}$ is a field of sets over $T$. Suppose $U$ and $V$ are ultrafilters on $\mathcal{A}$ and $\mathcal{B}$  respectively. Suppose $\mathcal{C}$ is a field of sets over $S\times T$ such that
\begin{enumerate}
\item $\{\,t\in T \mid (s,t) \in X \,\}\in \mathcal{B}$ whenever $X \in \mathcal{C}$ and $s \in S$;
\item $\{\,s\in S \mid \{\,t\in T \mid (s,t) \in X \,\} \in V\,\} \in \mathcal{A}$ whenever $X \in \mathcal{C}$.
\end{enumerate}
Then $ \{\,X \in \mathcal{C} \mid \{\,s\in S \mid \{\,t\in T \mid (s,t) \in X \,\} \in V\,\} \in U\,\}$ is an ultrafilter on $\mathcal{C}$.
\end{theorem}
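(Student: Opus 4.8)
The plan is to verify directly that the collection
$$W = \{\,X \in \mathcal{C} \mid \{\,s\in S \mid \{\,t\in T \mid (s,t) \in X \,\} \in V\,\} \in U\,\}$$
satisfies each of the four defining conditions of an ultrafilter on $\mathcal{C}$: it is closed under intersections, contains $S\times T$, excludes $\emptyset$, and for every $X\in\mathcal{C}$ contains exactly one of $X$ and its complement. Throughout I will use the abbreviation, for $X\in\mathcal{C}$, of writing $X_s = \{\,t\in T \mid (s,t)\in X\,\}$ for the vertical section at $s$, and $\pi(X) = \{\,s\in S \mid X_s \in V\,\}$ for the ``projection'' set; hypotheses (1) and (2) guarantee respectively that $X_s \in \mathcal{B}$ for all $s$ and that $\pi(X)\in\mathcal{A}$, so that the membership questions $X_s\in V$ and $\pi(X)\in U$ are legitimate. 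In this notation $W = \{\,X\in\mathcal{C} \mid \pi(X)\in U\,\}$.

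\textbf{Membership and the empty set.} For $X = S\times T$ we have $X_s = T\in\mathcal{B}$ for every $s$, and since $V$ is an ultrafilter $T\in V$, so $\pi(S\times T)=S\in U$; thus $S\times T\in W$. For $X=\emptyset$ we have $X_s=\emptyset\notin V$ for every $s$, so $\pi(\emptyset)=\emptyset\notin U$, giving $\emptyset\notin W$.

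\textbf{Closure under intersection.} First I would establish the pointwise identities that make sections behave like a Boolean homomorphism: for $X,Y\in\mathcal{C}$ one has $(X\cap Y)_s = X_s\cap Y_s$ and $(X^c)_s = (X_s)^c$, both by unwinding the definition of vertical section. Using that $V$ is an ultrafilter closed under intersection, $X_s\cap Y_s\in V$ if{f} both $X_s\in V$ and $Y_s\in V$, whence $\pi(X\cap Y)=\pi(X)\cap\pi(Y)$; then closure of $U$ under intersection gives $X\cap Y\in W$ whenever $X,Y\in W$.

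\textbf{The ultrafilter dichotomy.} This is the step I expect to carry the real content, since it is where the ultrafilter property of $V$ is used in its sharp form. The key claim is that $\pi(X^c) = (\pi(X))^c$. To see this, fix $s\in S$; since $(X^c)_s = (X_s)^c$ and $V$ is an ultrafilter on $\mathcal{B}$, exactly one of $X_s\in V$ and $(X_s)^c\in V$ holds, so $s\in\pi(X^c)$ if{f} $s\notin\pi(X)$. With this identity in hand, the dichotomy for $U$ on $\mathcal{A}$ applied to $\pi(X)\in\mathcal{A}$ yields that exactly one of $\pi(X)\in U$ and $(\pi(X))^c=\pi(X^c)\in U$ holds, which is precisely the statement that exactly one of $X\in W$ and $X^c\in W$ holds. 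The only subtlety worth flagging is that $X^c\in\mathcal{C}$ and that conditions (1) and (2) apply to $X^c$ as well, both of which are immediate since $\mathcal{C}$ is a field of sets and the hypotheses are stated for all members of $\mathcal{C}$. Assembling the four verified properties completes the proof.
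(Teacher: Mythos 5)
Your proposal is correct and follows essentially the same route as the paper, whose proof is simply declared ``straightforward'' apart from singling out the one identity carrying real content, namely $\{\,s \mid \{\,t \mid (s,t)\in X^c\,\}\in V\,\} = \{\,s \mid \{\,t \mid (s,t)\in X\,\}\in V\,\}^c$, which is exactly the claim $\pi(X^c)=\pi(X)^c$ you isolate and justify via condition~(1). Nothing further is needed.
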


\begin{proof}
Straightforward. The first condition ensures that  
$\{\,s\in S \mid \{\,t\in T \mid (s,t) \in X^c \,\} \in V\,\} =\{\,s\in S \mid \{\,t\in T \mid (s,t) \in X \,\} \in V\,\}^c$
whenever $X \in \mathcal{C}$.
\end{proof}


\begin{definition}\label{1117a}
Suppose $\mathcal{A}, \mathcal{B}, \mathcal{C}, S, T, U,V$ are as stated in Theorem \ref{0915a}. Let $U \times_{ \mathcal{C}} V$ denote the ultrafilter on $\mathcal{C}$ given by
$\{\,X \in \mathcal{C} \mid \{\,s\in S \mid \{\,t\in T \mid (s,t) \in X \,\} \in V\,\} \in U\,\}$.
\end{definition}

Every subalgebra of a Ramsey algebra is Ramsey. In fact, it is easy to show that assuming the collection of operations is countable, an algebra is Ramsey if and only if every countable subalgebra is Ramsey. This is due to the fact that the subalgebra generated by the terms of a given infinite sequence is countable.
Furthermore, any ultrafilter idempotent for a subalgebra of a given algebra can be extended naturally to an ultrafilter idempotent for the algebra. 
Therefore, for our purposes, without loss of generality, we may assume the underlying set is always $\omega$.

Henceforth, we will focus on fields of sets over $\omega$ or Cartesian power of $\omega$. 
For the sake of readability, we assume the variables $a,b, a_0, a_1, x_0,x_1$ and et cetera run through $\omega$ whenever it is applicable; for example, $\{ \,( a_1,\dotsc, a_{n-1}) \mid \{\, a_n \mid  (a_1, \dotsc, a_n) \in X \,\}$ stands for $\{ \,( a_1,\dotsc, a_{n-1}) \in \omega^{n-1} \mid \{\, a_n \in \omega \mid  (a_1, \dotsc, a_n) \in X \,\}$.

\begin{definition}\label{1010a}
Suppose $\mathfrak{A}=\{ \mathcal{A}_n  \}_{n\geq 1}$ is an indexed collection such that $\mathcal{A}_n$ is a field of sets over $\omega^n$ for every $n\geq 1$. We say that
$\mathfrak{A}$ is \emph{admissible} if{f} for every $n \geq 2 $ and $X \in \mathcal{A}_{n}$
\begin{enumerate}
\item $ \{\,(a_2, \dotsc,a_{n}, a_1) \mid (a_1, \dotsc,a_{n})\in X \,\} \in \mathcal{A}_{n}$;
\item $ \{\,(a_2, \dotsc,a_{n}) \mid (a_1, \dotsc,a_{n})\in X \,\} \in \mathcal{A}_{n-1}$ for all $a_1 \in \omega$.
\end{enumerate}
\end{definition}



\begin{proposition}\label{1510a}
Suppose $\mathfrak{A}=\{ \mathcal{A}_n  \}_{n\geq 1}$ is admissible.
If $n \geq 2 $ and $X \in \mathcal{A}_{n}$, then $\square(k)$ holds for every $1 \leq k \leq n-1$, where $\square(k)$  stands for the statement:
$$\{\,(a_{n-k+1}, \dotsc,a_{n}) \mid (a_1, \dotsc,a_{n})\in X \,\} \in \mathcal{A}_k \text{ for every } a_1, \dotsc, a_{n-k} \in \omega.$$
\end{proposition}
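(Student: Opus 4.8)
The plan is to establish the whole family of statements $\square(k)$ by a finite \emph{downward} induction on $k$, starting from $k=n-1$ and descending to $k=1$. The only ingredient I expect to need is clause (2) of Definition~\ref{1010a}; the cyclic-shift clause (1) plays no role here, since $\square(k)$ only ever strips coordinates off the \emph{front} of a tuple. The base case $k=n-1$ is immediate, because $\square(n-1)$ asks precisely that
$$\{\,(a_2, \dotsc, a_n) \mid (a_1, \dotsc, a_n) \in X\,\} \in \mathcal{A}_{n-1} \quad \text{for every } a_1 \in \omega,$$
which is literally clause (2) applied to $X \in \mathcal{A}_n$ (valid as $n \geq 2$).

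For the inductive step I would assume $\square(k)$ for some $k$ with $2 \leq k \leq n-1$ and deduce $\square(k-1)$. Fix arbitrary $a_1, \dotsc, a_{n-k} \in \omega$ and set
$$Y = \{\,(b_1, \dotsc, b_k) \mid (a_1, \dotsc, a_{n-k}, b_1, \dotsc, b_k) \in X\,\}.$$
By the induction hypothesis $\square(k)$ we have $Y \in \mathcal{A}_k$, and since $k \geq 2$ clause (2) applies to $Y$: for every $b_1 \in \omega$,
$$\{\,(b_2, \dotsc, b_k) \mid (b_1, b_2, \dotsc, b_k) \in Y\,\} \in \mathcal{A}_{k-1}.$$
The key (and essentially only) computation is to unfold the definition of $Y$ and recognise this last set as exactly $\{\,(b_2, \dotsc, b_k) \mid (a_1, \dotsc, a_{n-k}, b_1, b_2, \dotsc, b_k) \in X\,\}$. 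Relabelling $b_1$ as $a_{n-k+1}$, this is the slice of $X$ obtained by fixing the first $n-(k-1)$ coordinates $a_1, \dotsc, a_{n-k+1}$ and projecting onto the last $k-1$. As $a_1, \dotsc, a_{n-k}$ were arbitrary and $b_1$ ranges over all of $\omega$, the membership holds for all $a_1, \dotsc, a_{n-(k-1)} \in \omega$, which is exactly $\square(k-1)$.

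Since there is no genuine obstacle, the only thing to watch is the index bookkeeping. First, I must ensure that at each step the set to which clause (2) is applied lives in some $\mathcal{A}_m$ with $m \geq 2$, so that the clause is even applicable; this is guaranteed because the induction is run only for $k \geq 2$ and descends no further than $k=1$ (in particular, when $n=2$ only the base case occurs). Second, I must carry the universal quantifier over the fixed coordinates correctly through the unfolding of $Y$, so that $\square(k-1)$ is obtained for \emph{all} choices of the $n-k+1$ fixed coordinates rather than for a single one.
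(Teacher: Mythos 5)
Your proof is correct and is essentially the paper's own argument: the same downward induction on $k$, with the base case $\square(n-1)$ given directly by clause (2) of admissibility and the inductive step obtained by applying clause (2) to the slice of $X$ with its first $n-k$ coordinates fixed (your $Y$ is the paper's $Z$). Your observation that the cyclic-shift clause (1) is not needed here is also consistent with the paper's proof, which likewise invokes only clause (2).
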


\begin{proof}
Suppose $n \geq 2 $ and $X \in \mathcal{A}_{n}$.
We will prove by inverse induction on $k$ that $\square(k)$ holds for every $1\leq k \leq n-1$. By admissibility,
$\square(n-1)$ holds. Assume $\square(k)$ holds and $k>1$.  
Fix $a_1, \dotsc, a_{n-k+1} \in \omega$. 
Let $Y$ be the set $\{\,(a_{n-k+2}, \dotsc, a_{n}) \mid (a_1, \dotsc,a_{n-k+1}, a_{n-k+2}, \dotsc, a_n)\in X \,\}$
and let $Z$ be the set $\{\,(b,a_{n-k+2}, \dotsc,a_{n}) \mid (a_1,\dotsc, a_{n-k}, b, a_{n-k+2}, \dotsc,a_{n})\in X \,\}$. By the induction hypothesis,
$Z$ is in $\mathcal{A}_k$. Since 
$Y$ is equal to $\{\,(a_{n-k+2}, \dotsc,a_{n}) \mid (a_{n-k+1}, a_{n-k+2},  \dotsc,a_{n}) \in Z  \,\}$, by admissibility, it follows that $Y\in \mathcal{A}_{k-1}$, showing that $\square(k-1)$ holds.
\end{proof}

\begin{remark}
A result stronger than Proposition~\ref{1510a} can be proven. For example, assuming $\{ \mathcal{A}_n  \}_{n\geq 1}$ is admissible, it can be shown that $ \{\,(a_3, a_5,a_2) \mid (a_1, a_2,a_3,a_4,a_5)\in X \,\} \in \mathcal{A}_3$ for every $a_1,a_4\in \omega$ and $X\in \mathcal{A}_5$. It is not done because it is unnecessary and notationally cumbersome.
\end{remark}

\begin{definition}
Suppose $\mathfrak{A}=\{ \mathcal{A}_n  \}_{n\geq 1}$ is admissible. An \emph{$\mathfrak{A}$-ultrafilter} is an ultrafilter $U$ on $\mathcal{A}_1$ such that for every $n \geq 1$ and $X\in \mathcal{A}_{n+1} $,
$$\{ \,( a_1,\dotsc, a_{n}) \mid \{\, a_{n+1} \mid  (a_1, \dotsc, a_{n+1}) \in X \,\}\in U\,\}\in \mathcal{A}_n.$$
The set of all $\mathfrak{A}$-ultrafilters is denoted by $\beta\mathfrak{A}$.
\end{definition}

Note that by Proposition~\ref{1510a}, $\{\,a_{n+1} \mid (a_1, \dotsc,a_{n+1})\in X \,\} \in \mathcal{A}_1$ whenever $X\in \mathcal{A}_{n+1} $ and $ a_1, \dotsc, a_{n}\in \omega$.

\begin{example}\label{1912a}
Suppose $\mathcal{A}_n$ is the set of finite and cofinite subsets of $\omega^n$ for each $n \in \omega$. Then $\mathfrak{A}=\{ \mathcal{A}_n  \}_{n\geq 1}$ is admissible. All principal ultrafilters and the unique nonprincipal ultrafilter on $\mathcal{A}_1$ can easily be verified to be $\mathfrak{A}$-ultrafilters. 
\end{example}

\begin{proposition}\label{2312b}
Suppose $\mathfrak{A}=\{\mathcal{A}_{n}\}_{n \geq 1}$ is admissible. Then every principal ultrafilter on $\mathcal{A}_1$ is an $\mathfrak{A}$-ultrafilter.
\end{proposition}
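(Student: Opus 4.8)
The plan is to reduce the $\mathfrak{A}$-ultrafilter condition for a principal ultrafilter to a purely combinatorial statement about fixing one coordinate, and then to derive that statement from the two admissibility clauses.

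First I would fix the generator: let $U$ be the principal ultrafilter on $\mathcal{A}_1$ generated by some $a \in \omega$, so that $U = \{\, X \in \mathcal{A}_1 \mid a \in X \,\}$. Fix $n \geq 1$ and $X \in \mathcal{A}_{n+1}$; the goal is to show
$$E := \{\,(a_1, \dotsc, a_n) \mid \{\, a_{n+1} \mid (a_1, \dotsc, a_{n+1}) \in X \,\} \in U \,\} \in \mathcal{A}_n.$$
By the note following the definition of an $\mathfrak{A}$-ultrafilter (an instance of Proposition~\ref{1510a}), for each fixed $(a_1, \dotsc, a_n)$ the inner set $\{\, a_{n+1} \mid (a_1, \dotsc, a_{n+1}) \in X \,\}$ lies in $\mathcal{A}_1$, so membership in $U$ is meaningful; and since $U$ is principal, that membership holds if and only if $a$ belongs to the inner set, that is, if and only if $(a_1, \dotsc, a_n, a) \in X$. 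Hence $E = \{\,(a_1, \dotsc, a_n) \mid (a_1, \dotsc, a_n, a) \in X \,\}$, and the problem becomes: fixing the \emph{last} coordinate of a set in $\mathcal{A}_{n+1}$ to the constant $a$ produces a set in $\mathcal{A}_n$.

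The only obstacle is that admissibility clause~(2) lets one fix the \emph{first} coordinate, not the last. I would resolve this by first rotating the last coordinate into the front position using clause~(1): applying the cyclic shift of clause~(1) a total of $n$ times to $X$ keeps us inside $\mathcal{A}_{n+1}$ at each step and yields the set $X' = \{\,(a_{n+1}, a_1, \dotsc, a_n) \mid (a_1, \dotsc, a_{n+1}) \in X \,\} \in \mathcal{A}_{n+1}$. Now clause~(2) applied to $X'$ with first coordinate $a$ gives $\{\,(a_1, \dotsc, a_n) \mid (a, a_1, \dotsc, a_n) \in X' \,\} \in \mathcal{A}_n$, and since $(a, a_1, \dotsc, a_n) \in X'$ exactly when $(a_1, \dotsc, a_n, a) \in X$, this set is precisely $E$. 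This completes the plan; the only points requiring care are the bookkeeping that $n$ left cyclic shifts send the last coordinate to the front and the verification that each intermediate rotation stays in $\mathcal{A}_{n+1}$, both of which are immediate from repeated use of clause~(1). The case $n = 1$ is the same argument with a single application of the shift.
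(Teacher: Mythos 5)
Your argument is correct and follows essentially the same route as the paper's proof: reduce membership in the principal ultrafilter to substituting the generator into the last coordinate, rotate that coordinate to the front by repeated use of admissibility clause (1), and then apply clause (2). The bookkeeping (that $n$ left cyclic shifts of a set in $\mathcal{A}_{n+1}$ bring the last coordinate to the front) is also right.
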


\begin{proof}
Suppose $U$ is a principal ultrafilter on $\mathcal{A}_1$ generated by some $c\in \omega$. 
Suppose $n \geq 1 $ and $X \in \mathcal{A}_{n+1}$. 
For every $a_1, \dotsc, a_{n} \in \omega$, by the definition of a principal ultrafilter, $\{\, a_{n+1} \mid  (a_1, \dotsc, a_{n+1}) \in X \,\}\in U$ if and only if
$(a_1, \dotsc, a_{n}, c) \in X$. Hence, to see that $U$ is an $\mathfrak{A}$-ultrafilter, it suffices to show that
$$ Y:=\{\,(a_1, \dotsc,a_n) \mid (a_1, \dotsc, a_{n}, c) \in X \,\} \in \mathcal{A}_n.$$
Using admissibility repeatedly, $Z:=\{\,(c,a_1, \dotsc,a_n) \mid (a_1, \dotsc, a_{n}, c) \in X \,\} \in \mathcal{A}_{n+1}$. 
Hence, $Y$ is in $\mathcal{A}_n$ because $Y$ is equal to $\{\,(a_1, \dotsc,a_n) \mid (c,a_1, \dotsc, a_{n}) \in Z \,\}$.
\end{proof}

Theorem~\ref{0915a} justifies the following definition.

\begin{definition}
Suppose $\mathfrak{A}=\{ \mathcal{A}_n  \}_{n\geq 1}$ is admissible and  $U_i$ is an $\mathfrak{A}$-ultrafilter for every $i\geq 1$. Define $(U_1)_{\mathfrak{A}}$ to be $U_1$.
For each $n \geq 2$,  define  $(U_1 \otimes \dotsb \otimes U_n)_{\mathfrak{A}}$ inductively to be the ultrafilter $(U_1 \otimes \dotsb   \otimes U_{n-1})_{\mathfrak{A}} \times_{\mathcal{A}_n  }    U_n $ on $\mathcal{A}_n$.
\end{definition}

For simplicity, we will write $U_1 \otimes \dotsb \otimes U_n$ for $(U_1 \otimes \dotsb \otimes U_n)_{\mathfrak{A}}$ and $U^n$ for $(\underbrace{U \otimes \dotsb \otimes U}_{n \text{ times}})_{\mathfrak{A}}$
 whenever $\mathfrak{A}$ is understood.

To be clear, $U_1 \otimes U_2= \{\, X\in \mathcal{A}_2 \mid \{ \, a_1 \mid \{\, a_2 \mid  (a_1, a_2) \in X \,\}\in U_2\,\}
\in U_1  \,\}$. Assuming $U_1 \otimes \dotsb   \otimes U_{n-1}$ is an ultrafilter on $\mathcal{A}_{n-1}$, let us justify that 
$U_1 \otimes \dotsb   \otimes U_n$ is an ultrafilter on $\mathcal{A}_n$.
We apply Theorem~\ref{0915a} with $\mathcal{A}=\mathcal{A}_{n-1}$, $S=\omega^{n-1}$, $\mathcal{B}=\mathcal{A}_1$, $T=\omega$, 
$U=U_1 \otimes \dotsb   \otimes U_{n-1}$, $V=U_n$ and $\mathcal{C}=\mathcal{A}_n$.
Recall $((a_1, \dotsc, a_{n-1}), a_n)$ is the same as  $(a_1, \dotsc, a_n)$. Hence, the two conditions in Theorem~\ref{0915a} translate into:
\begin{enumerate}
\item $\{\,a_n \mid (a_1, \dotsc,a_{n})\in X \,\} \in \mathcal{A}_1$ whenever $X\in \mathcal{A}_n $ and $ a_1, \dotsc, a_{n-1}\in \omega$;
\item $\{ \,( a_1,\dotsc, a_{n-1}) \mid \{\, a_{n} \mid  (a_1, \dotsc, a_{n}) \in X \,\}\in U\,\}\in \mathcal{A}_{n-1}$ whenever $X\in \mathcal{A}_n$.
\end{enumerate}
By Proposition~\ref{1510a}, the first condition is satisfied  because $\mathfrak{A}$ is admissible. The second condition is satisfied because $U$ is an $\mathfrak{A}$-ultrafilter.
Therefore, $\{\, X\in \mathcal{A}_n \mid \{\, ( a_1,\dotsc, a_{n-1}) \mid \{\, a_n \mid  (a_1, \dotsc, a_n) \in X\, \}\in U_n\,\}\in U_1 \otimes \dotsb   \otimes U_{n-1}\, \}$
is an ultrafilter on $\mathcal{A}_n$, and that is our $U_1 \otimes \dotsb \otimes U_n$.

\begin{proposition}\label{0913a}
Suppose $\mathfrak{A}=\{ \mathcal{A}_n  \}_{n \geq 1}$ is admissible and $U_1, \dotsc,U_n$ are $\mathfrak{A}$-ultrafilters, where $n \geq 2$.  Then for every $X \in \mathcal{A}_n$ and  $1 \leq k \leq n-1$, 
$$W:=\{(a_1, \dotsc,a_{n-k}) \mid \{(a_{n-k+1}, \dotsc,a_n) \mid (a_1, \dotsc,a_n)\in X \} \in U_{n-k+1} \otimes \dotsb \otimes U_n\}$$
is in $\mathcal{A}_{n-k}$ and  $X \in U_1 \otimes \dotsb \otimes U_n$ if and only if $W \in U_1 \otimes \dotsb \otimes U_{n-k}$.
\end{proposition}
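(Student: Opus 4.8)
The plan is to reduce both assertions to a single ``iterated slicing'' description of membership in a tensor product and then read them off. The genuine difficulty is that $U_1 \otimes \dotsb \otimes U_n$ is defined by repeatedly peeling off the \emph{last} coordinate (the product is left-associated through $\times_{\mathcal{A}_n}$), whereas the proposition groups the last $k$ coordinates into a single block over $\omega^k$; bridging these two viewpoints is the heart of the matter.

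First I would establish the following auxiliary claim by induction on $m$. Given $\mathfrak{A}$-ultrafilters $V_1, \dotsc, V_m$ and $Z \in \mathcal{A}_m$, set $Z^{[m]} := Z$ and, for $1 \leq j < m$,
\[
Z^{[j]} := \{\,(c_1, \dotsc, c_j) \mid \{\, c_{j+1} \mid (c_1, \dotsc, c_{j+1}) \in Z^{[j+1]}\,\} \in V_{j+1}\,\}.
\]
The claim is that each $Z^{[j]} \in \mathcal{A}_j$ and that $Z \in V_1 \otimes \dotsb \otimes V_m$ if and only if $Z^{[1]} \in V_1$. The base case $m = 1$ is immediate. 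For the inductive step I would use the definition $V_1 \otimes \dotsb \otimes V_m = (V_1 \otimes \dotsb \otimes V_{m-1}) \times_{\mathcal{A}_m} V_m$ to see that $Z \in V_1 \otimes \dotsb \otimes V_m$ iff $Z^{[m-1]} \in V_1 \otimes \dotsb \otimes V_{m-1}$, where $Z^{[m-1]} \in \mathcal{A}_{m-1}$ precisely because $V_m$ is an $\mathfrak{A}$-ultrafilter (the inner slice lying in $\mathcal{A}_1$ by Proposition~\ref{1510a}); applying the induction hypothesis to $Z^{[m-1]}$ then finishes it, since the recursion for $Z^{[m-1]}$ reproduces $Z^{[j]}$ for all $j \leq m-1$. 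The only real work here is the field-membership of the intermediate sets, which is exactly where admissibility and the $\mathfrak{A}$-ultrafilter property are consumed.

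With the claim in hand I would apply it three times, taking care only to keep the indices aligned. Applying it to $X$ with $V_i = U_i$ and $m = n$ gives $X^{[n]} = X$ down through $X^{[1]} \in \mathcal{A}_1$, with $X \in U_1 \otimes \dotsb \otimes U_n$ iff $X^{[1]} \in U_1$. Next, applying it to the block $U_{n-k+1}, \dotsc, U_n$ regarded over $\omega^k$ shows that, for each fixed $(a_1, \dotsc, a_{n-k})$, the slice $\{\,(a_{n-k+1}, \dotsc, a_n) \mid (a_1, \dotsc, a_n) \in X\,\}$ (which is in $\mathcal{A}_k$ by Proposition~\ref{1510a}) lies in $U_{n-k+1} \otimes \dotsb \otimes U_n$ exactly when the corresponding $k$-fold iterated condition holds; matching this against the recursion for $X^{[n-k]}$ yields $W = X^{[n-k]}$, and in particular $W \in \mathcal{A}_{n-k}$, which is the first assertion. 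Finally, applying the claim to $W = X^{[n-k]}$ with $V_i = U_i$ and $m = n-k$, and observing that the recursion gives $(X^{[n-k]})^{[j]} = X^{[j]}$ for $1 \leq j \leq n-k$ (in particular at $j = 1$), I get that $W \in U_1 \otimes \dotsb \otimes U_{n-k}$ iff $X^{[1]} \in U_1$, hence iff $X \in U_1 \otimes \dotsb \otimes U_n$, which is the desired equivalence.

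The step I expect to be the main obstacle is the bookkeeping that identifies $W$ with $X^{[n-k]}$ and verifies $(X^{[n-k]})^{[j]} = X^{[j]}$: conceptually transparent, but it requires being scrupulous about which ultrafilter is peeled at which coordinate and about the relabeling when the last $k$ coordinates are treated as a single block over $\omega^k$. Everything else is a routine unwinding of definitions, with admissibility (through Proposition~\ref{1510a}) and the defining property of an $\mathfrak{A}$-ultrafilter guaranteeing at each stage that the sets produced remain in the appropriate field.
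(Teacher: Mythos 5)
Your proof is correct and follows essentially the same route as the paper's: both unwind the left-associated product $U_1\otimes\dotsb\otimes U_n$ by peeling off the last coordinate one step at a time, using the $\mathfrak{A}$-ultrafilter property (together with Proposition~\ref{1510a}) to keep each intermediate slice in the appropriate field. The paper packages this as a single induction on $n$, reducing the case $(n,k)$ to $(n-1,k-1)$ via the set $Z$ obtained by slicing off $a_n$, whereas you isolate the full telescoping characterization as a standalone lemma and then identify $W$ with the intermediate stage $X^{[n-k]}$ --- a reorganization of the same idea rather than a different argument.
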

(Note that by Proposition~\ref{1510a}, $ \{\,(a_{n-k+1}, \dotsc,a_n) \mid (a_1, \dotsc,a_n)\in X \,\} \in \mathcal{A}_k$ whenever $a_1, \dotsc, a_{n-k} \in \omega$.)

\begin{proof}
We proceed by induction on $n$. The base step $n=2$ is immediate by the definition of $\mathfrak{A}$-ultrafilter and $U_1 \otimes U_2$. Assume $n>2$.
Suppose $X \in \mathcal{A}_n$, $1 \leq k \leq n-1$ and $W$ is as stated in the lemma. If $k=1$, then 
the conclusion follows similarly. Now, assume $k>2$.
Since $U_n$ is an $\mathfrak{A}$-ultrafilter, $Z:=\{\,(a_1, \dotsc,a_{n-1}) \mid \{\,a_n\mid (a_1, \dotsc,a_n)\in X \,\} \in U_n\,\}$ is in $\mathcal{A}_{n-1}$
and $X \in U_1 \otimes \dotsb \otimes U_n$ if and only if $Z \in U_1 \otimes \dotsb \otimes U_{n-1}$.
Now,  $W$ is equal to  $\{(a_1, \dotsc,a_{n-k}) \mid \{\,(a_{n-k+1}, \dotsc,a_{n-1})\mid \{\,a_n \mid (a_1, \dotsc,a_n)\in X \,\} \in U_n \,\}\in U_{n-k+1} \otimes \dotsb \otimes U_{n-1}\,\}$, which in turn is equal to
 $\{\,(a_1, \dotsc,a_{n-k}) \mid \{\,(a_{n-k+1},\dotsc a_{n-1}) \mid (a_1, \dotsc,a_{n-1})\in
Z\,\} \in U_{n-k+1} \otimes \dotsb \otimes U_{n-1}\,\}$.
By the induction hypothesis, $W$ is in $\mathcal{A}_{n-k}$ and
$Z \in U_1 \otimes \dotsb \otimes U_{n-1}$ if and only if $W \in U_1 \otimes \dotsb \otimes U_{n-k}$.
Therefore, $X \in U_1 \otimes \dotsb \otimes U_n$ if and only if $W \in U_1 \otimes \dotsb \otimes U_{n-k}$.
\end{proof}

\begin{definition}\label{1510b}
Suppose $\mathfrak{A}=\{ \mathcal{A}_n  \}_{n\geq 1}$ is admissible.
An $m$-ary operation $f$ on $\omega$ is an \emph{$\mathfrak{A}$-operation} if{f}
for every $n \geq 1$ and $X \in\mathcal{A}_n$,
$$\{\,( a_1, \dotsc, a_{n+m-1} ) \mid ( a_1,\dotsc, a_{n-1}, f(a_{n}, \dotsc,,a_{n+m-1}) ) \in X\,\} \in \mathcal{A}_{n+m-1}.$$ 
In particular,  $f^{-1}[X] \in \mathcal{A}_m$ for all $X \in\mathcal{A}_1$.
\end{definition}


\begin{theorem}\label{0911a}
Suppose $\mathfrak{A}=\{ \mathcal{A}_n  \}_{n\geq 1}$ is admissible.
Suppose $f$ is an $m$-ary $\mathfrak{A}$-operation. If $U_1, \dotsc, U_m$ are $\mathfrak{A}$-ultrafilters, then
the collection $\{\, X\in \mathcal{A}_1  \mid f^{-1}[X]\in U_1\otimes \dotsb\otimes U_m\,\}$ is an $\mathfrak{A}$-ultrafilter.
\end{theorem}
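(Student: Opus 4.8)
The plan is to verify directly that the collection $W:=\{\,X\in\mathcal{A}_1\mid f^{-1}[X]\in U_1\otimes\dotsb\otimes U_m\,\}$ meets both clauses in the definition of an $\mathfrak{A}$-ultrafilter, namely that it is an ultrafilter on $\mathcal{A}_1$ and that it satisfies the required sectioning condition. The first clause is immediate from Proposition~\ref{1006a}: the product $U_1\otimes\dotsb\otimes U_m$ is an ultrafilter on $\mathcal{A}_m$, and the ``in particular'' part of Definition~\ref{1510b} gives $f^{-1}[X]\in\mathcal{A}_m$ for every $X\in\mathcal{A}_1$ since $f$ is an $\mathfrak{A}$-operation. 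Applying Proposition~\ref{1006a} with $S=\omega^m$, $\mathcal{A}=\mathcal{A}_m$, $U=U_1\otimes\dotsb\otimes U_m$, $T=\omega$, $\mathcal{B}=\mathcal{A}_1$ and the map $f$ then shows at once that $W$ is an ultrafilter on $\mathcal{A}_1$.

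The content lies in the second clause. Fixing $n\geq 1$ and $X\in\mathcal{A}_{n+1}$, I must show that
$$P:=\{\,(a_1,\dotsc,a_n)\mid \{\,a_{n+1}\mid (a_1,\dotsc,a_{n+1})\in X\,\}\in W\,\}$$
lies in $\mathcal{A}_n$. Unwinding the definition of $W$, the section $\{\,a_{n+1}\mid(a_1,\dotsc,a_{n+1})\in X\,\}$ belongs to $W$ exactly when its $f$-preimage, which is $\{\,(b_1,\dotsc,b_m)\mid (a_1,\dotsc,a_n,f(b_1,\dotsc,b_m))\in X\,\}$, lies in $U_1\otimes\dotsb\otimes U_m$. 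This suggests introducing the auxiliary set
$$Y:=\{\,(a_1,\dotsc,a_n,b_1,\dotsc,b_m)\mid (a_1,\dotsc,a_n,f(b_1,\dotsc,b_m))\in X\,\},$$
which is precisely the set produced by Definition~\ref{1510b} applied to $X$ with arity parameter $n+1$; hence $Y\in\mathcal{A}_{n+m}$, and for each fixed $(a_1,\dotsc,a_n)$ its section in the last $m$ coordinates is exactly the $f$-preimage computed above.

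To finish, I would read off $P$ as an instance of the set whose membership in a field is asserted by Proposition~\ref{0913a}. Taking that proposition with $Y\in\mathcal{A}_{n+m}$ in the role of its $X$, with $N:=n+m$ and $k:=m$, and with the last $m$ ultrafilters chosen to be $U_1,\dotsc,U_m$, Proposition~\ref{0913a} asserts that
$$\{\,(a_1,\dotsc,a_n)\mid \{\,(b_1,\dotsc,b_m)\mid (a_1,\dotsc,a_n,b_1,\dotsc,b_m)\in Y\,\}\in U_1\otimes\dotsb\otimes U_m\,\}$$
lies in $\mathcal{A}_n$. Since the inner section of $Y$ is the $f$-preimage identified above, this set is exactly $P$, which completes the verification that $W$ is an $\mathfrak{A}$-ultrafilter.

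I expect the main obstacle to be bookkeeping rather than conceptual, the essential content having already been deposited into Definition~\ref{1510b} and Proposition~\ref{0913a}. Care is needed to identify $Y$ as the output of the $\mathfrak{A}$-operation axiom with the correct arity parameter $n+1$, and to align the coordinate blocks $(a_1,\dotsc,a_n)$ and $(b_1,\dotsc,b_m)$ so that $P$ matches the conclusion of Proposition~\ref{0913a}. One small technical point is that Proposition~\ref{0913a} is stated with all $N=n+m$ ultrafilters required to be $\mathfrak{A}$-ultrafilters, whereas only the last $m$ are given to us; the first $n$ do not affect the membership conclusion $P\in\mathcal{A}_n$ and may simply be taken to be any $\mathfrak{A}$-ultrafilters, which exist by Proposition~\ref{2312b}.
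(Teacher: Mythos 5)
Your proof is correct and follows essentially the same route as the paper's: Proposition~\ref{1006a} for the ultrafilter clause, then Definition~\ref{1510b} with arity parameter $n+1$ to produce the auxiliary set in $\mathcal{A}_{n+m}$ (the paper's $Z$, your $Y$), and Proposition~\ref{0913a} to conclude membership in $\mathcal{A}_n$. Your closing remark about padding the first $n$ slots of Proposition~\ref{0913a} with arbitrary (e.g.\ principal) $\mathfrak{A}$-ultrafilters is a small point the paper glosses over, and it is handled correctly.
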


\begin{proof}
Let $\mathscr{C}$ be $\{\, X\in \mathcal{A}_1  \mid f^{-1}[X]\in U_1\otimes \dotsb\otimes U_m\,\}$. Since  $f^{-1}[X] \in \mathcal{A}_m$ for all $X \in\mathcal{A}_1$ and  
$U_1\otimes \dotsb\otimes U_m$ is an ultrafilter on $\mathcal{A}_m$, by Proposition~\ref{1006a}, $\mathscr{C}$ is an ultrafilter on $\mathcal{A}_1$.
Fix $n \geq 1$ and $X \in \mathcal{A}_{n+1}$. 
We need to show that $W:=\{ \,( a_1,\dotsc, a_{n}) \mid \{\, a_{n+1} \mid  (a_1, \dotsc, a_{n+1}) \in X \,\}\in \mathscr{C}\,\}\in \mathcal{A}_{n}$.
For every $a_1, \dotsc, a_{n}\in \omega$, it is easy to verify that $\{\, a_{n+1} \mid  (a_1, \dotsc, a_{n+1}) \in X \,\}\in \mathscr{C}$ if and only if $\{\, (a_{n+1}, \dotsc,a_{n+m}) \mid (a_1, \dotsc, a_{n}, f(a_{n+1}, \dotsc,a_{n+m}) ) 
\in X\,\} \in U_1\otimes \dotsb\otimes U_{m}
$. 

Since $f$ is an $\mathfrak{A}$-operation, there exists $Z \in \mathcal{A}_{n+m}$ such that  $(a_1, \dotsc,a_{n+m}) \in Z$ if and only if
$(a_1, \dotsc, a_{n}, f(a_{n+1}, \dotsc,a_{n+m}) ) \in X$.
Therefore, 
$W$ is equal to $\{ \,( a_1,\dotsc, a_{n}) \mid \{\, (a_{n+1}, \dotsc,a_{n+m}) \mid (a_1, \dotsc,a_{n+m}) \in Z\,\} \in U_1\otimes \dotsb\otimes U_m\,\}$.
By Proposition~\ref{0913a}, $W$ is in $\mathcal{A}_{n}$.
\end{proof}

Theorem~\ref{0911a} justifies the following definition.

\begin{definition}
Suppose $\mathfrak{A}=\{ \mathcal{A}_n  \}_{n\geq 1}$ is admissible and $f$  is an $m$-ary $\mathfrak{A}$-operation. Define $f_*^{\mathfrak{A}}$ to be the $m$-ary operation on $\beta\mathfrak{A}$ given by
$$f_*^{\mathfrak{A}}(U_1, \dotsc, U_m)=\{\, X\in \mathcal{A}_1  \mid f^{-1}[X]\in U_1\otimes \dotsb\otimes U_m\,\}.$$ 
\end{definition}

In fact, $f_*^{\mathfrak{A}}$ is an operation on $\beta\mathfrak{A}$ that extends $f$, when we identify the natural numbers with the principal ultrafilters on $\mathcal{A}_1$.

\begin{example}
Let $\mathfrak{A}$ be $\{\mathcal{P}(\omega^n)\}_{n\geq 1}$. Then $\mathfrak{A}$ is trivially admissible. If $f$ is any operation on $\omega$, then $f$ is immediately
an $\mathfrak{A}$-operation. In fact, $(\beta\mathfrak{A},f_*^{\mathfrak{A}} )$ is equal to $(\beta\omega, f_*) $.
\end{example}


\begin{proposition}\label{3112a}
Suppose $\mathfrak{A}=\{ \mathcal{A}_n  \}_{n\geq 1}$ and $\mathfrak{B}=\{ \mathcal{B}_n  \}_{n\geq 1}$ are admissible.
For every $n \geq 1$, assume $\mathcal{A}_n\subseteq \mathcal{B}_n$, $U_n$ is an $\mathfrak{A}$-ultrafilter and $V_n$ is a $\mathcal{B}$-ultrafilter, where $U_n=V_n\cap \mathcal{A}_1$. Then for every $m \geq 1$, we have
$$(U_1\otimes \dotsb \otimes U_m)_{\mathfrak{A}} = (V_1\otimes \dotsb \otimes  V_m)_{\mathfrak{B}} \cap \mathcal{A}_m.$$
Furthermore, if $f$ is an $m$-ary $\mathfrak{A}$-operation as well as a $\mathfrak{B}$-operation, then  $f_*^{\mathfrak{A}}(U_1, \dotsc, U_m )= f_*^{\mathfrak{B}}(V_1, \dotsc, V_m )\cap \mathcal{A}_1$.
\end{proposition}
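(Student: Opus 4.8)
The plan is to establish the first identity by induction on $m$ and then deduce the statement about operations as an immediate consequence. Throughout I would write $P_m=(U_1\otimes\dotsb\otimes U_m)_{\mathfrak{A}}$ and $Q_m=(V_1\otimes\dotsb\otimes V_m)_{\mathfrak{B}}$, so the goal is $P_m=Q_m\cap\mathcal{A}_m$. The base case $m=1$ is exactly the hypothesis $U_1=V_1\cap\mathcal{A}_1$. For the inductive step I would assume $P_{m-1}=Q_{m-1}\cap\mathcal{A}_{m-1}$, fix $X\in\mathcal{A}_m$, and show $X\in P_m$ if and only if $X\in Q_m$. Unwinding the definition of $\times_{\mathcal{A}_m}$ and $\times_{\mathcal{B}_m}$ reduces this to comparing the two slice sets
\[ W_U=\{(a_1,\dotsc,a_{m-1})\mid \{a_m\mid (a_1,\dotsc,a_m)\in X\}\in U_m\} \]
and the analogous $W_V$ obtained by replacing $U_m$ with $V_m$.

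The key observation I would make is that for each fixed $(a_1,\dotsc,a_{m-1})$ the inner slice $\{a_m\mid (a_1,\dotsc,a_m)\in X\}$ lies in $\mathcal{A}_1$ by Proposition~\ref{1510a}, since $X\in\mathcal{A}_m$. Because $U_m=V_m\cap\mathcal{A}_1$, membership of an $\mathcal{A}_1$-set in $U_m$ is equivalent to its membership in $V_m$; hence $W_U=W_V$, and I will denote this common set by $W$. Since $U_m$ is an $\mathfrak{A}$-ultrafilter and $X\in\mathcal{A}_m$, I also get $W\in\mathcal{A}_{m-1}$. Now $X\in P_m$ iff $W\in P_{m-1}$, and $X\in Q_m$ iff $W\in Q_{m-1}$; by the inductive hypothesis together with $W\in\mathcal{A}_{m-1}$, we have $W\in P_{m-1}$ iff $W\in Q_{m-1}\cap\mathcal{A}_{m-1}$ iff $W\in Q_{m-1}$. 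Chaining these equivalences gives $X\in P_m$ iff $X\in Q_m$, completing the induction.

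For the operation part I would fix $X\in\mathcal{A}_1\subseteq\mathcal{B}_1$. Since $f$ is an $\mathfrak{A}$-operation, $f^{-1}[X]\in\mathcal{A}_m\subseteq\mathcal{B}_m$, and since $f$ is also a $\mathfrak{B}$-operation the quantity $f_*^{\mathfrak{B}}(V_1,\dotsc,V_m)$ is defined. By definition, $X\in f_*^{\mathfrak{A}}(U_1,\dotsc,U_m)$ iff $f^{-1}[X]\in P_m$, and $X\in f_*^{\mathfrak{B}}(V_1,\dotsc,V_m)$ iff $f^{-1}[X]\in Q_m$. The first identity applied to the $\mathcal{A}_m$-set $f^{-1}[X]$ yields $f^{-1}[X]\in P_m$ iff $f^{-1}[X]\in Q_m$, so the two images agree on $\mathcal{A}_1$, which is precisely $f_*^{\mathfrak{A}}(U_1,\dotsc,U_m)=f_*^{\mathfrak{B}}(V_1,\dotsc,V_m)\cap\mathcal{A}_1$.

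I expect the only delicate point to be the identity $W_U=W_V$, which is where the two structural assumptions genuinely enter: every relevant slice must be an $\mathcal{A}_1$-set, supplied by Proposition~\ref{1510a}, and $U_m$ and $V_m$ must agree on $\mathcal{A}_1$, supplied by $U_m=V_m\cap\mathcal{A}_1$. One must also keep track of the fact that $W$ lands in $\mathcal{A}_{m-1}$ (using that $U_m$ is an $\mathfrak{A}$-ultrafilter) so that the inductive hypothesis can be invoked. Everything else is routine bookkeeping with the definitions of $\times_{\mathcal{A}_m}$, $\times_{\mathcal{B}_m}$, and $f_*$.
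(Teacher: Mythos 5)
Your proposal is correct and follows essentially the same route as the paper: induction on $m$, with the key step being that the two slice sets $W_U$ and $W_V$ coincide because each inner slice lies in $\mathcal{A}_1$ (via Proposition~\ref{1510a}) and $U_m=V_m\cap\mathcal{A}_1$, after which the inductive hypothesis applies since $W\in\mathcal{A}_{m-1}$. The operation part is likewise deduced exactly as in the paper from the first identity applied to $f^{-1}[X]\in\mathcal{A}_m$.
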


\begin{proof}
We prove the first part by induction on $m$. The base step holds trivially. For the inductive step, first, suppose $X\in \mathcal{A}_{m+1}$.
Let $W_X^{\mathfrak{A}}$ denote $\{\, (a_1, \dotsc, a_m) \mid \{\, a_{m+1} \mid (a_1, \dotsc, a_{m+1})\in X \,\}\in U_{m+1}\,\}$
and $W_X^{\mathfrak{B}}$ denote $\{\, (a_1, \dotsc, a_m) \mid \{\, a_{m+1} \mid (a_1, \dotsc, a_{m+1})\in X \,\}\in V_{m+1}\,\}$.
Note that $W_X^{\mathfrak{A}}=W_X^{\mathfrak{B}}$ because $U_{m+1}=V_{m+1}\cap \mathcal{A}_1$. Additionally, since $U_{m+1}$ is an $\mathfrak{A}$-ultrafilter, $W_X^{\mathfrak{A}}\in \mathcal{A}_m$. Therefore,
\begin{align*}
&(V_1\otimes \dotsb \otimes  V_{m+1})_{\mathfrak{B}} \cap \mathcal{A}_{m+1}\\
&=\{ X\in \mathcal{B}_{m+1} \mid W_X^{\mathfrak{B}} \in (V_1\otimes \dotsb \otimes  V_{m})_{\mathfrak{B}} \}\cap \mathcal{A}_{m+1}\\
&=\{\, X\in \mathcal{A}_{m+1} \mid W_X^{\mathfrak{A}}
\in (V_1\otimes \dotsb \otimes  V_{m})_{\mathfrak{B}} \,\}\\
&=\{\, X\in \mathcal{A}_{m+1} \mid W_X^{\mathfrak{A}}
\in (U_1\otimes \dotsb \otimes  U_{m})_{\mathfrak{A}} \,\} \qquad \text{(by the induction hypothesis)}\\
&=(U_1\otimes \dotsb \otimes U_{m+1})_{\mathfrak{A}}
\end{align*}
For the second part, 
$f_*^{\mathfrak{B}}(V_1, \dotsc, V_m )\cap \mathcal{A}_1=\{\, X\in \mathcal{A}_1\mid f^{-1}[X]\in (V_1\otimes \dotsb \otimes  V_m)_{\mathfrak{B}}\,\}=\{\, X\in \mathcal{A}_1\mid f^{-1}[X]\in (U_1\otimes \dotsb \otimes  U_m)_{\mathfrak{A}}\,\}= 
f_*^{\mathfrak{A}}(U_1, \dotsc, U_m )$. The second equality holds because of the first part and $f^{-1}[X]\in \mathcal{A}_m$ for all $X\in \mathcal{A}_1$.
\end{proof}


\begin{theorem}\label{2312a}
Suppose $\mathfrak{A}=\{ \mathcal{A}_n  \}_{n\geq 1}$ is admissible. If $(\omega,f)$ is a semigroup and $f$ is an $\mathfrak{A}$-operation, then $(\beta\mathfrak{A}, f_\ast^{\mathfrak{A}})$ is a semigroup. 
\end{theorem}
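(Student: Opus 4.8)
The plan is to reduce the whole statement to a single associativity check, since the rest is already in place. Because $(\omega,f)$ is a semigroup, $f$ is binary, so Theorem~\ref{0911a} (with $m=2$) guarantees that $f_*^{\mathfrak{A}}(U,V)\in\beta\mathfrak{A}$ whenever $U,V\in\beta\mathfrak{A}$; thus $f_*^{\mathfrak{A}}$ is a well-defined binary operation on $\beta\mathfrak{A}$, and it remains only to verify that it is associative.

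To do this I would fix $U,V,W\in\beta\mathfrak{A}$ and an arbitrary $X\in\mathcal{A}_1$, and unwind the two membership statements $X\in f_*^{\mathfrak{A}}(f_*^{\mathfrak{A}}(U,V),W)$ and $X\in f_*^{\mathfrak{A}}(U,f_*^{\mathfrak{A}}(V,W))$ in parallel, using only the definition of $f_*^{\mathfrak{A}}$ together with the definition of $\otimes$ (that is, of $\times_{\mathcal{A}_2}$). After three nestings the first statement becomes
\[
\{\,a \mid \{\,b\mid \{\,c \mid f(f(a,b),c)\in X\,\}\in W\,\}\in V\,\}\in U,
\]
while the second becomes, in exactly the same fashion,
\[
\{\,a \mid \{\,b\mid \{\,c \mid f(a,f(b,c))\in X\,\}\in W\,\}\in V\,\}\in U.
\]
Since $(\omega,f)$ is a semigroup we have $f(f(a,b),c)=f(a,f(b,c))$ for all $a,b,c\in\omega$, so the innermost sets agree and the two displayed conditions are literally identical. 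Hence the two ultrafilters $f_*^{\mathfrak{A}}(f_*^{\mathfrak{A}}(U,V),W)$ and $f_*^{\mathfrak{A}}(U,f_*^{\mathfrak{A}}(V,W))$ contain exactly the same members of $\mathcal{A}_1$, and are therefore equal; this is associativity.

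I expect the genuine content to be bookkeeping rather than any new idea: at each stage of the unwinding one must certify that the intermediate ``section'' sets actually lie in the field containing the relevant ultrafilter, so that the membership questions are meaningful. For example, the set $\{\,c \mid f(a,f(b,c))\in X\,\}$ must belong to $\mathcal{A}_1$ before one asks whether it is in $W$, the set one level out must belong to $\mathcal{A}_1$ before one asks whether it is in $V$, and so on. These closure facts are precisely what admissibility (Proposition~\ref{1510a}), the hypothesis that $f$ is an $\mathfrak{A}$-operation (Definition~\ref{1510b}), and the hypothesis that $U,V,W$ are $\mathfrak{A}$-ultrafilters deliver — indeed they are the same ingredients already marshalled in the proofs of Theorem~\ref{0911a} and Proposition~\ref{0913a}, which I would cite to guarantee well-definedness. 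The only real obstacle is carrying the three nested, quantifier-like conditions through the two expansions cleanly and confirming that they differ solely in the bracketing of $f$, after which the semigroup law closes the argument.
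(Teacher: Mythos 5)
Your argument is correct, but it takes a genuinely different route from the paper. The paper does not compute at all: it sets $\mathfrak{B}=\{\mathcal{P}(\omega^n)\}_{n\geq 1}$, observes that $(\beta\mathfrak{B},f_*^{\mathfrak{B}})$ is the classical $(\beta\omega,f_*)$ and hence a semigroup, extends $U_1,U_2,U_3$ to ultrafilters $V_1,V_2,V_3$ on the full power set, and then applies Proposition~\ref{3112a} twice to pull the identity $f_*^{\mathfrak{B}}(f_*^{\mathfrak{B}}(V_1,V_2),V_3)=f_*^{\mathfrak{B}}(V_1,f_*^{\mathfrak{B}}(V_2,V_3))$ down to $\mathcal{A}_1$. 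What that transfer buys is brevity: all of the bookkeeping you describe is absorbed into Proposition~\ref{3112a} and into the classical black box. What your direct unwinding buys is self-containedness and economy of hypotheses: the paper's route invokes the ultrafilter extension lemma (a choice principle) to produce the $V_i$ and quotes the associativity of $(\beta\omega,f_*)$, whereas your computation uses only the definitions of $\otimes$ and $f_*^{\mathfrak{A}}$ together with the closure facts already on hand (Proposition~\ref{1510a}, Definition~\ref{1510b}, the $\mathfrak{A}$-ultrafilter condition, and Theorem~\ref{0911a} for well-definedness), so it never leaves the countable framework --- arguably closer to the paper's reverse-mathematical motivation. Your plan is complete as stated; the only point worth making explicit is that both sides are ultrafilters on $\mathcal{A}_1$ by Theorem~\ref{0911a}, so agreeing on membership of every $X\in\mathcal{A}_1$ does give equality, and that each section set you interrogate (e.g.\ $\{\,c\mid f(a,f(b,c))\in X\,\}$, being a section of $f^{-1}[Z_a]$ with $Z_a=\{\,b\mid f(a,b)\in X\,\}\in\mathcal{A}_1$) lands in $\mathcal{A}_1$ for exactly the reasons you cite.
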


\begin{proof}
Let   $\mathfrak{B}$ equal to  $\{\mathcal{P}(\omega^n)\}_{n\geq 1}$. 
Then $(\beta\mathfrak{B}, f_\ast^{\mathfrak{B}})$, being equal to $(\beta\omega, f_*)$, is a semigroup. Suppose $U_1, U_2,U_3\in \beta\mathfrak{A}$. Let $V_1, V_2,V_3$ be any ultrafilter on $\mathcal{P}(\omega)$ that extends $U_1,U_2,U_3$ respectively. 
By Proposition~\ref{3112a}, $f_*^{\mathfrak{A}}(U_1, U_2)=f_*^{\mathfrak{B}}(V_1, V_2)\cap \mathcal{A}_1$
and $f_*^{\mathfrak{A}}(U_2, U_3)=f_*^{\mathfrak{B}}(V_2, V_3)\cap\mathcal{A}_1$.
Therefore,
$f_*^{\mathfrak{A}}(f_*^{\mathfrak{A}}(U_1, U_2),U_3) = f_*^{\mathfrak{B}}(f_*^{\mathfrak{B}}(V_1, V_2),V_3)\cap \mathcal{A}_1=f_*^{\mathfrak{B}}  (V_1,f_*^{\mathfrak{B}}(V_2, V_3) ) \cap \mathcal{A}_1 =f_*^{\mathfrak{A}}  (U_1,f_*^{\mathfrak{A}}(U_2, U_3)) $.
The first and the third equality follow from Proposition~\ref{3112a}.
\end{proof}

\begin{definition}\label{1120a}
Suppose $\mathfrak{A}=\{ \mathcal{A}_n  \}_{n\geq 1}$ is admissible and $U$ is an $\mathfrak{A}$-ultrafilter. Suppose $\mathcal{F}$ is a collection of  $\mathfrak{A}$-operations. We say that $U$ is \emph{idempotent} for $\mathcal{F}$ with respect to $\mathfrak{A}$ if{f} $f_*^{\mathfrak{A}}(U,\dotsc,U)=U$ whenever $f \in \mathcal{F}$. 
\end{definition}


\begin{example}\label{2706e}
Suppose $\mathcal{A}_n$ is the set of finite and cofinite subsets of $\omega^n$ for each $n \in \omega$. Suppose $f$ is any binary operation on $\omega$ such that $f^{-1}[\{a\}]$ is finite for all $a\in \omega$. Then $f$ is an $\mathfrak{A}$-operation. Furthermore, the unique nonprincipal $\mathfrak{A}$-ultrafilter is idempotent for $f$ (or $\{f\}$ precisely) with respect to $\mathfrak{A}$. However, $(\omega,f)$ need not be a Ramsey algebra and ultrafilters on $\mathcal{P}(\omega)$ idempotent for $f$ may not exist.
\end{example}

Suppose $\mathfrak{A}=\{ \mathcal{A}_n  \}_{n\geq 1}$ is admissible and $f$ is a binary associative $\mathfrak{A}$-operation.  
Take any ultrafilter $V$ on $\omega$ such that $f_*(V,V)=V$. Let $U$ be $V\cap \mathcal{A}_1$.
If $U$ is an $\mathfrak{A}$-ultrafilter, by Proposition~\ref{3112a}, $U$ is idempotent for $f$ with respect to $\mathfrak{A}$.
Equivalently, if an $\mathfrak{A}$-ultrafilter $U$ can be extended to an ultrafilter on $\omega$ idempotent for $f$, then $U$ is idempotent for $f$ with respect to $\mathfrak{A}$.
However, Example~\ref{2706e} suggests that this may not be possible, even if $U$ is idempotent for $f$ with respect to $\mathfrak{A}$.  
Therefore, although $(\beta\mathfrak{A}, f_\ast^{\mathfrak{A}})$ is a semigroup, it is conceivable that an $\mathfrak{A}$-ultrafilter idempotent for $f$ with respect to $\mathfrak{A}$ may not exist. If that is true, one can further analyze the admissible $\mathfrak{A}$ such that the existence of such idempotent ultrafilters is guaranteed. 

Nevertheless, our framework is sufficient and minimal in some sense to allow the construction of a homegeneous sequence for every set in an idempotent ultrafilter.

\begin{theorem}
Suppose $\mathfrak{A}=\{ \mathcal{A}_n  \}_{n\geq 1}$ is admissible and $f$ is a binary associative $\mathfrak{A}$-operation. If $U$ is an  $\mathfrak{A}$-ultrafilter idempotent for $f$ with respect to $\mathfrak{A}$, then for every $X\in U$, there exists $\vec{a}\in {^\omega}\omega$ such that $\FR_{\mathcal{F}}(\vec{a})\subseteq X$.
\end{theorem}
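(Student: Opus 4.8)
The plan is to adapt the classical Galvin--Glazer construction of finite products from an idempotent ultrafilter to the present field-of-sets framework; the conceptual skeleton is standard, and the only genuinely new work is checking that every set produced along the way lies in the appropriate $\mathcal{A}_n$, so that each assertion ``$\in U$'' remains meaningful. Write $\mathcal{F}=\{f\}$. For $Y\in\mathcal{A}_1$ and $a\in\omega$ let $a^{-1}Y=\{\,b\mid f(a,b)\in Y\,\}$. Since $f$ is an $\mathfrak{A}$-operation we have $f^{-1}[Y]\in\mathcal{A}_2$, and then Proposition~\ref{1510a} gives $a^{-1}Y\in\mathcal{A}_1$. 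For $Y\in U$ set $Y^\star=\{\,a\in Y\mid a^{-1}Y\in U\,\}$; because $f^{-1}[Y]\in\mathcal{A}_2$ and $U$ is an $\mathfrak{A}$-ultrafilter, the section $\{\,a\mid a^{-1}Y\in U\,\}$ lies in $\mathcal{A}_1$, so $Y^\star\in\mathcal{A}_1$. Finally, since $f$ is binary and associative, the orderly terms over $\mathcal{F}$ applied to an increasing-index finite subsequence produce exactly the ordered products, so $\FR_{\{f\}}(\vec{a})$ is the set of all ordered products of terms of $\vec{a}$.

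First I would establish two facts. (i) If $Y\in U$ then $Y^\star\in U$: idempotency means $Y\in U$ forces $f^{-1}[Y]\in U\otimes U$, which by the definition of $U\times_{\mathcal{A}_2}U$ says precisely that $\{\,a\mid a^{-1}Y\in U\,\}\in U$; intersecting with $Y\in U$ gives $Y^\star\in U$. (ii) If $Y\in U$ and $a\in Y^\star$ then $a^{-1}(Y^\star)\in U$: using associativity one checks the set identity $(f(a,b))^{-1}Y=b^{-1}(a^{-1}Y)$, whence $a^{-1}(Y^\star)=(a^{-1}Y)^\star$; since $a\in Y^\star$ means $a^{-1}Y\in U$, and $a^{-1}Y\in\mathcal{A}_1$, fact (i) applied to $a^{-1}Y$ yields $(a^{-1}Y)^\star\in U$.

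With these in hand I would build $\vec{a}=\langle a_n\rangle_{n\in\omega}$ together with a descending chain $\langle Y_n\rangle$ of members of $U\cap\mathcal{A}_1$ by recursion, maintaining the invariant that $Y_n\subseteq X^\star$ and that $Y_n\subseteq c^{-1}(X^\star)$ for every $c\in\FR_{\{f\}}(\langle a_0,\dotsc,a_{n-1}\rangle)$. Start with $Y_0=X^\star$, which lies in $U$ by (i). Given $Y_n$, pick any $a_n\in Y_n$ (possible since $Y_n\in U$ is nonempty); the invariant then guarantees that every element of $\FR_{\{f\}}(\langle a_0,\dotsc,a_n\rangle)$ lies in $X^\star$, since each such product either involves only earlier terms, equals $a_n\in Y_n\subseteq X^\star$, or has the form $f(c,a_n)$ with $a_n\in Y_n\subseteq c^{-1}(X^\star)$ (here associativity lets one write every product through $a_n$ with $a_n$ as its final factor). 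Then set $Y_{n+1}=X^\star\cap\bigcap\{\,c^{-1}(X^\star)\mid c\in\FR_{\{f\}}(\langle a_0,\dotsc,a_n\rangle)\,\}$; this is a finite intersection of members of $U$ (each $c^{-1}(X^\star)\in U$ by (ii), as $c\in X^\star$) and of members of $\mathcal{A}_1$, hence $Y_{n+1}\in U\cap\mathcal{A}_1$ and the invariant is preserved. The resulting sequence satisfies $\FR_{\{f\}}(\vec{a})\subseteq X^\star\subseteq X$.

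The main obstacle is bookkeeping rather than conceptual. In the classical $\beta\omega$ argument, facts (i) and (ii) are immediate once one notes $Y^\star,a^{-1}Y\in\mathcal{P}(\omega)$, whereas here I must repeatedly invoke that $f$ is an $\mathfrak{A}$-operation (Definition~\ref{1510b}, keeping each $f^{-1}[\cdot]$ in $\mathcal{A}_2$) and that $U$ is an $\mathfrak{A}$-ultrafilter (keeping each inner section $\{\,a\mid a^{-1}Y\in U\,\}$ in $\mathcal{A}_1$, via Proposition~\ref{1510a}), so that none of $Y^\star$, $a^{-1}Y$, or $Y_{n+1}$ escapes the field of sets on which $U$ is defined. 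The translation between idempotency and the iterated section rests on the definition of $U\otimes U$ and on Proposition~\ref{0913a}; assembling these verifications correctly, rather than any single hard estimate, is where the effort goes.
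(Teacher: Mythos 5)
Your proposal is correct and is exactly the argument the paper intends: the paper's proof consists of the single remark that ``the standard Galvin's method of constructing homogeneous sequences, for example, as in Theorem~5.8 of \cite{HS12} can be carried out,'' and your write-up is that method carried out in detail, with the (correct) additional bookkeeping via Definition~\ref{1510b}, Proposition~\ref{1510a}, the $\mathfrak{A}$-ultrafilter condition, and Proposition~\ref{0913a} to keep $Y^\star$, $a^{-1}Y$ and each $Y_{n+1}$ inside $\mathcal{A}_1$. No discrepancy in approach; yours is simply the fully explicit version.
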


\begin{proof}
The standard Galvin's method of constructing homogeneous sequences, for example, as in Theorem~5.8 of \cite{HS12} can be carried out. 
\end{proof}

\section{Existence of Idempotent Ultrafilters}



In this section, we will exbibit the existence of idempotent ultrafilters for every nondegenerate Ramsey algebra in some countable sense.
To do this, countability is imposed on the general framework introduced in the previous section. 
Apart from the reason from the point of reverse mathematics, the countability assumption is actually crucial in our construction.
We start off with some lemmas.

\begin{lemma}\label{1120b}
Suppose $\mathcal{F}$ is a countable collection of operations on $\omega$. 
Suppose $S_n$ is a countable collection of subsets of $\omega^n$ for every $n \geq 1$.
Then there exists an admissible $\mathfrak{A}=\{ \mathcal{A}_n  \}_{n\geq 1}$ such that
$\mathcal{F}$ is a collection of  $\mathfrak{A}$-operations and
$\mathcal{A}_n$ is a countable superset of $S_n$ for every $n\geq 1$.
\end{lemma}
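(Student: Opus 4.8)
The plan is to realize all the required closure properties through a single application of Theorem~\ref{2111}. Consider the heterogeneous algebra whose sorts are $A_n=\mathcal{P}(\omega^n)$ for $n\geq 1$, so that the elements of sort $n$ are exactly the subsets of $\omega^n$ that could possibly lie in $\mathcal{A}_n$. On this heterogeneous algebra I would list a countable collection $\mathcal{G}$ of operations whose closure encodes simultaneously (i) being a field of sets, (ii) admissibility, and (iii) every $f\in\mathcal{F}$ being an $\mathfrak{A}$-operation.

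Concretely, $\mathcal{G}$ consists of the following. For each $n\geq 1$: the binary operations of union $(X,Y)\mapsto X\cup Y$ and intersection $(X,Y)\mapsto X\cap Y$ and the unary complementation $X\mapsto \omega^n\setminus X$ on $A_n$ (these force each $\mathcal{A}_n$ to be a field of sets, $\emptyset$ and $\omega^n$ coming for free). For each $n\geq 2$: the cyclic shift $\sigma_n\colon A_n\to A_n$ sending $X$ to $\{(a_2,\dots,a_n,a_1)\mid (a_1,\dots,a_n)\in X\}$ and, for every $c\in\omega$, the slice $\tau_{n,c}\colon A_n\to A_{n-1}$ sending $X$ to $\{(a_2,\dots,a_n)\mid (c,a_2,\dots,a_n)\in X\}$ (these encode clauses (1) and (2) of Definition~\ref{1010a}). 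And for each $f\in\mathcal{F}$ of arity $m$ and each $n\geq 1$: the substitution operation $\phi_{f,n}\colon A_n\to A_{n+m-1}$ sending $X$ to $\{(a_1,\dots,a_{n+m-1})\mid (a_1,\dots,a_{n-1},f(a_n,\dots,a_{n+m-1}))\in X\}$ (these encode Definition~\ref{1510b}, the clause $f^{-1}[X]\in\mathcal{A}_m$ being the case $n=1$). Since $\mathcal{F}$, $\omega$, and the index $n$ are all countable, $\mathcal{G}$ is a countable collection of operations on $\{A_n\}_{n\geq 1}$.

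For the seed sets I would take $X_n=S_n\cup\{\{(a_1,\dots,a_n)\}\mid (a_1,\dots,a_n)\in\omega^n\}$, which is countable since $S_n$ and $\omega^n$ are; adding the singletons secures the standing convention that every field of sets contains all singletons. Applying Theorem~\ref{2111} to $(\{A_n\}_{n\geq 1},\mathcal{G})$ with these seeds yields countable supersets $\mathcal{A}_n\supseteq X_n\supseteq S_n$ such that $\{\mathcal{A}_n\}_{n\geq 1}$ is closed under $\mathcal{G}$. It then remains only to read off the three conclusions from the closure conditions: closure under union, intersection, and complementation makes each $\mathcal{A}_n$ a field of sets over $\omega^n$ containing all singletons; closure under $\sigma_n$ and the $\tau_{n,c}$ gives admissibility of $\mathfrak{A}=\{\mathcal{A}_n\}_{n\geq 1}$; and closure under the $\phi_{f,n}$ makes each $f\in\mathcal{F}$ an $\mathfrak{A}$-operation.

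The only real work is bookkeeping: correctly transcribing each defining clause of admissibility and of ``$\mathfrak{A}$-operation'' as a closure condition, and verifying that the resulting list $\mathcal{G}$ is genuinely countable (the one infinite family being the slices $\tau_{n,c}$ indexed by $c\in\omega$, which a countable union absorbs). I expect no substantive obstacle, since Theorem~\ref{2111} does all the heavy lifting of maintaining countability through the closure; the argument is essentially a dictionary between the combinatorial conditions defining $\mathfrak{A}$ and the operations of a suitably chosen heterogeneous algebra.
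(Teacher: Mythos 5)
Your proposal is correct and follows essentially the same route as the paper's proof: the paper likewise packages union/intersection/complementation, the cyclic shift, the first-coordinate slices, and the substitution maps $X\mapsto\{(a_1,\dotsc,a_{n+m-1})\mid (a_1,\dotsc,a_{n-1},f(a_n,\dotsc,a_{n+m-1}))\in X\}$ into one countable collection of operations on $\{\mathcal{P}(\omega^n)\}_{n\geq 1}$ and invokes Theorem~\ref{2111} once. Your explicit inclusion of the singletons in the seed sets is a small tidy addition that the paper leaves to its standing convention.
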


\begin{proof}
For each $n\geq 1$, let $\cap_n$, $\cup_n$ and  $\comp_n$ denote the respective operations on $\mathcal{P}(\omega^n)$ defined by $\cap_n(X,Y)=X\cap Y$, $\cup_n(X,Y)=X\cup Y$, $\comp_n(X)=\omega^n\backslash X$. 
For $n \geq 2$, the operation $\cyc_n$ on $\mathcal{P}(\omega^n)$ is defined by $\cyc_n(X)= \{\,(a_2, \dotsc,a_n, a_1) \mid (a_1, \dotsc,a_n)\in X \,\}$.
For each $n\geq 2$ and $c\in \omega$, let $\fib_n^c \colon \mathcal{P}(\omega^{n}) \rightarrow \mathcal{P}(\omega^{n-1})$ be defined by 
$\fib_n^c(X)= \{\,(a_1, \dotsc,a_{n-1}) \mid (c,a_1, \dotsc,a_{n-1})\in X \,\}$.
For every $f\in \mathcal{F}$ and $n\geq 1$, say $f$ is $m$-ary, let $\pre_n^f \colon \mathcal{P}(\omega^n) \rightarrow \mathcal{P}(\omega^{n+m-1})$ be defined by 
$\pre_n^f(X)= \{\,( a_1, \dotsc, a_{n+m-1} ) \mid ( a_1,\dotsc, a_{n-1}, f(a_{n}, \dotsc,,a_{n+m-1}) ) \in X\,\}$.
Suppose $\mathscr{H}=\{\, \cup_n, \cap_n, \comp_n \mid n\geq 1\,\} \bigcup \{\, \cyc_n, \fib_n^c \mid n\geq 2, c\in \omega\,\} \bigcup \{\, \pre_n^f \mid f\in \mathcal{F}, n\geq 1\,\} $. 
Then $\mathscr{H}$ is a countable collection of operations on
$\{ \mathcal{P}(\omega^n) \}_{n\geq 1}$. By Theorem~\ref{2111}, choose $\mathfrak{A}=\{ \mathcal{A}_n  \}_{n\geq 1}$ such that $\mathcal{A}_n$ is a countable superset of $S_n$
for every $n\geq 1$ and that $\mathfrak{A}$ is closed under $\mathscr{H}$. For every $n \geq 1$, since $\mathfrak{A}$ is  closed under $\{\cap_n,\cup_n,\comp_n \mid n \geq 1\}$, $\mathcal{A}_n$ is a field of sets over $\omega^n$. Since $\mathfrak{A}$ is closed under $\{ \,\cyc_n, \adm_n^c \mid n \geq 2, c\in \omega\,\}$, it is admissible.
For every $f\in \mathcal{F}$, since $\mathfrak{A}$ is closed under $\{\, \pre_n^f \mid n\geq 1\,\} $, it follows that $f$ is an $\mathfrak{A}$-operation.
\end{proof}

\begin{lemma}\label{2311a}
Suppose $\mathfrak{A}_i=\{\mathcal{A}_{n,i}\}_{n \geq 1}$ is admissible for every $i$ in an index set $I$. Suppose  $\mathcal{F}$ is a collection of operations on $\omega$, each of which is an $\mathfrak{A}_i$-operation for every $i\in I$. For every $n \geq 1$, take $\mathcal{A}_n= \bigcup_{i \in I} \mathcal{A}_{n,i}$. Then $\mathfrak{A}= \{\mathcal{A}_n\}_{n\geq 1}$ is admissible and $\mathcal{F}$ is a collection of $\mathfrak{A}$-operations.
\end{lemma}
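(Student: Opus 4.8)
The plan is to check the two conclusions by observing that nearly every defining condition concerns a \emph{single} set $X$, so it can be verified inside whichever member $\mathfrak{A}_i$ of the family happens to contain $X$. Concretely, for admissibility conditions (1) and (2) of Definition~\ref{1010a}, I would fix $n \geq 2$ and $X \in \mathcal{A}_n$, choose $i \in I$ with $X \in \mathcal{A}_{n,i}$, and use that $\mathfrak{A}_i$ is admissible: the cyclic shift $\{\,(a_2,\dotsc,a_n,a_1) \mid (a_1,\dotsc,a_n)\in X\,\}$ then lies in $\mathcal{A}_{n,i}\subseteq \mathcal{A}_n$, and for each $a_1$ the fiber $\{\,(a_2,\dotsc,a_n)\mid (a_1,\dotsc,a_n)\in X\,\}$ lies in $\mathcal{A}_{n-1,i}\subseteq\mathcal{A}_{n-1}$. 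The $\mathfrak{A}$-operation property of each $f\in\mathcal{F}$ (say $m$-ary) is handled identically: given $n\geq 1$ and $X\in\mathcal{A}_n$, pick $i$ with $X\in\mathcal{A}_{n,i}$; since $f$ is an $\mathfrak{A}_i$-operation, the set $\{\,(a_1,\dotsc,a_{n+m-1})\mid (a_1,\dotsc,a_{n-1},f(a_n,\dotsc,a_{n+m-1}))\in X\,\}$ belongs to $\mathcal{A}_{n+m-1,i}\subseteq\mathcal{A}_{n+m-1}$, as required by Definition~\ref{1510b}. Each of these arguments invokes only the one index supplied by $X$, so they impose no constraint on $I$.

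What remains is to show that every $\mathcal{A}_n$ is a field of sets over $\omega^n$, which is part of what admissibility demands. Closure under complementation and membership of $\omega^n$ (hence $\emptyset$) are again pointwise: if $X\in\mathcal{A}_{n,i}$ then $\omega^n\setminus X\in\mathcal{A}_{n,i}\subseteq\mathcal{A}_n$, and $\omega^n$ lies in every $\mathcal{A}_{n,i}$.

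The only genuine content---and the step I expect to be the obstacle---is closure under finite unions and intersections, where pointwise reasoning breaks down: if $X\in\mathcal{A}_{n,i}$ and $Y\in\mathcal{A}_{n,j}$ with $i\neq j$, there is no reason for $X\cup Y$ to belong to any single field, and for incomparable fields it typically does not. To push the argument through I would assume that the family $\{\mathfrak{A}_i\}_{i\in I}$ is directed by inclusion---as it is in the intended application, where the $\mathfrak{A}_i$ arise as an increasing chain produced by iterating Lemma~\ref{1120b}. Directedness supplies, for any $i,j$, an index $k$ with $\mathcal{A}_{n,i}\cup\mathcal{A}_{n,j}\subseteq\mathcal{A}_{n,k}$; then $X,Y\in\mathcal{A}_{n,k}$, and since $\mathcal{A}_{n,k}$ is a field, $X\cup Y$ and $X\cap Y$ lie in $\mathcal{A}_{n,k}\subseteq\mathcal{A}_n$. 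With each $\mathcal{A}_n$ now a field of sets and the shift, fiber, and operation conditions already verified, it follows that $\mathfrak{A}$ is admissible and that $\mathcal{F}$ is a collection of $\mathfrak{A}$-operations.
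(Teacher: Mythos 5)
Your argument is correct and is essentially the intended one: the paper's own proof of this lemma is the single word ``Straightforward,'' and the pointwise verification you give (check each defining condition inside whichever $\mathcal{A}_{n,i}$ contains the particular set $X$ at hand, then use $\mathcal{A}_{n,i}\subseteq\mathcal{A}_n$) is exactly what that word is hiding. More importantly, you have put your finger on a genuine imprecision in the statement itself: with an arbitrary index set $I$, the union $\mathcal{A}_n=\bigcup_{i\in I}\mathcal{A}_{n,i}$ need not be closed under finite unions and intersections, so it need not be a field of sets over $\omega^n$, and admissibility of $\mathfrak{A}$ fails already at that precondition. Your repair---require the family $\{\mathfrak{A}_i\}_{i\in I}$ to be directed by inclusion, pick a common index $k$ for $X$ and $Y$, and close up inside the single field $\mathcal{A}_{n,k}$---is the right one, and it costs nothing in this paper: the lemma's only application is to the increasing chain $\mathcal{A}_{n,0}\subseteq\mathcal{A}_{n,1}\subseteq\dotsb$ constructed in the proof of Theorem~\ref{0905b}, where directedness is automatic. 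So your proof establishes the version of the lemma that is actually true and actually used.
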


\begin{proof}
Straightforward.
\end{proof}


Definition~\ref{0207a} is modified very slightly to the following. It serves us well as shown by the following theorem.

\begin{definition}\label{1119e}
Suppose $\mathcal{F}$ is a collection of operations on $\omega$ and $\mathcal{A}$ is a field of sets over $\omega$. An ultrafilter $U$ on $\mathcal{A}$ is \emph{strongly reductible} for $\mathcal{F}$ if{f}  for every  $X \in U$, there exists $\vec{a} \in {^\omega}\omega$ such that 
$\FR_{\mathcal{F}}(\vec{a}) \subseteq X$ and $\FR_{\mathcal{F}}(\vec{a}-i) \in U$ for all $i \in \omega$.
\end{definition}

\begin{theorem}\label{1207f}
Suppose $\mathfrak{A}=\{ \mathcal{A}_n  \}_{n\geq 1}$ is admissible and $U$ is an $\mathfrak{A}$-ultrafilter. Suppose $\mathcal{F}$ is a collection of $\mathfrak{A}$-operations and
$U$ is strongly reductible for $\mathcal{F}$. Then $U$ is idempotent for $\mathcal{F}$ with respect to $\mathfrak{A}$.
\end{theorem}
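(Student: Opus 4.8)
The plan is to fix an arbitrary $m$-ary $f\in\mathcal{F}$ and prove $f_*^{\mathfrak{A}}(U,\dotsc,U)=U$. Both $f_*^{\mathfrak{A}}(U,\dotsc,U)$ and $U$ are ultrafilters on $\mathcal{A}_1$, and any ultrafilter on a field of sets that contains another ultrafilter on the same field must equal it (if $X$ lay in the larger one but not in $U$, then $X^c\in U$ would also lie in the larger ultrafilter, forcing $X\cap X^c=\emptyset$ into it). Hence it suffices to establish the single containment $U\subseteq f_*^{\mathfrak{A}}(U,\dotsc,U)$, that is, $f^{-1}[X]\in U^m$ for every $X\in U$. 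Unwinding the latter by iterating Proposition~\ref{0913a} (together with the slicing supplied by Proposition~\ref{1510a}), $f^{-1}[X]\in U^m$ is equivalent to the fully nested membership
$$\{\,a_1 \mid \{\,a_2\mid \dotsb \{\,a_m \mid f(a_1,\dotsc,a_m)\in X\,\}\in U \dotsb\,\}\in U\,\}\in U,$$
and the same propositions certify that every single-coordinate slice occurring here lies in $\mathcal{A}_1$, and that each partial nested set lies in the appropriate $\mathcal{A}_k$. This is exactly what legitimizes the upward-closure steps below inside the field of sets.

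Next I would fix $X\in U$ and, invoking strong reducibility, choose $\vec{c}\in{^\omega}\omega$ with $\FR_{\mathcal{F}}(\vec{c})\subseteq X$ and $\FR_{\mathcal{F}}(\vec{c}-i)\in U$ for all $i\in\omega$. The combinatorial core is the observation that if $a_1,\dotsc,a_m$ are finite reductions of $\vec{c}$ drawn from pairwise disjoint blocks of strictly increasing index, then $f(a_1,\dotsc,a_m)\in\FR_{\mathcal{F}}(\vec{c})$: writing $a_j=g_j(\vec{c}_{(j)})$ with each $g_j$ an orderly term and with $\vec{c}_{(1)},\dotsc,\vec{c}_{(m)}$ disjoint increasing finite subsequences, the operation $f(g_1(\bar{x}_1),\dotsc,g_m(\bar{x}_m))$ is an orderly composition of orderly terms, hence itself an orderly term (Definition~\ref{0601a}), applied to the finite subsequence $\vec{c}_{(1)}\ast\dotsb\ast\vec{c}_{(m)}$ of $\vec{c}$. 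Since $\FR_{\mathcal{F}}(\vec{c})\subseteq X$, any such $f(a_1,\dotsc,a_m)$ lies in $X$.

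With this in hand I would prove, by downward induction on $k$ from $m$ down to $1$, that for all finite reductions $a_1,\dotsc,a_k$ of $\vec{c}$ drawn from disjoint increasing blocks the $k$-th nested statement $P_k(a_1,\dotsc,a_k)$ holds, where $P_m$ is the statement $f(a_1,\dotsc,a_m)\in X$ and, for $k<m$, $P_k$ is the statement $\{\,a_{k+1}\mid P_{k+1}(a_1,\dotsc,a_{k+1})\,\}\in U$. The base case $k=m$ is precisely the combinatorial core. For the inductive step, given $a_1,\dotsc,a_k$ all of whose block indices lie below some $N$, every $a_{k+1}\in\FR_{\mathcal{F}}(\vec{c}-N)$ uses only indices $\geq N$, so $a_1,\dotsc,a_{k+1}$ come from disjoint increasing blocks and $P_{k+1}$ holds by the inductive hypothesis; thus $\FR_{\mathcal{F}}(\vec{c}-N)\subseteq\{\,a_{k+1}\mid P_{k+1}(a_1,\dotsc,a_{k+1})\,\}$, and since the left side lies in $U$ while the right side lies in $\mathcal{A}_1$, upward closure of $U$ puts the slice in $U$, i.e. $P_k$ holds. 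Applying the case $k=1$ to each single-block reduction yields $\FR_{\mathcal{F}}(\vec{c})=\FR_{\mathcal{F}}(\vec{c}-0)\subseteq\{\,a_1\mid P_1(a_1)\,\}$; as $\FR_{\mathcal{F}}(\vec{c})\in U$, the outermost nested set is in $U$, which is exactly $f^{-1}[X]\in U^m$. Since $f\in\mathcal{F}$ was arbitrary, $U$ is idempotent for $\mathcal{F}$ with respect to $\mathfrak{A}$.

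I expect the main obstacle to be bookkeeping rather than new ideas. The downward induction is the familiar Galvin--Glazer argument transplanted to orderly terms, but at every level one must continually verify that each single-coordinate slice and each partial nested set remains inside the fields $\mathcal{A}_k$, so that the upward closure of $U$ within $\mathcal{A}_1$ may legitimately be invoked. This is where admissibility of $\mathfrak{A}$, together with Propositions~\ref{0913a} and \ref{1510a}, is genuinely needed; keeping the nested-membership characterization of $U^m$ aligned with the combinatorics is the delicate part, whereas the disjoint-block orderly-composition step is routine once the framework is set up.
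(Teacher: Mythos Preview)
Your proposal is correct and follows essentially the same approach as the paper: reduce to showing $f^{-1}[X]\in U^m$ for every $X\in U$, pick $\vec{c}$ by strong reductibility, and run a Galvin--Glazer style induction using that $\FR_{\mathcal{F}}(\vec{c}-N)\in U$ and that orderly compositions of orderly terms are orderly terms. The only cosmetic difference is that the paper indexes its induction by the number $k$ of \emph{free} coordinates (ascending from $k=1$, working directly with $U^k$ via Proposition~\ref{0913a}), whereas you index by the number of \emph{fixed} coordinates (descending from $k=m$) after first unwinding $U^m$ into iterated single-variable membership; the combinatorial content is identical.
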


\begin{proof}
Suppose  $f$ is an $m$-ary  operation in $\mathcal{F}$. Since $f^{\mathfrak{A}}_*(U^m)$ is an ultrafilter on $\mathcal{A}_1$, to see that $U$ is idempotent for $f$ with respect to $\mathfrak{A}$, it suffices to show that $X\in f^{\mathfrak{A}}_*(U^m)$ for every $X\in U$. Suppose $X\in U$. Choose $\vec{b} \in {^\omega}\omega$ such that 
$\FR_{\mathcal{F}}(\vec{b}) \subseteq X$ and $\FR_{\mathcal{F}}(\vec{b}-i) \in U$ for all $i \in \omega$. We need to show that $f^{-1}[X] \in U^m$ and this follows from the case $k=m$ in the following claim.

\begin{claim}
For each $1 \leq k \leq m$ and $a_1, \dotsc, a_{m-k}\in \omega$ such that $\langle a_1, \dotsc, a_{m-k}\rangle\unlhd_{\mathcal{F}} \vec{b}\!\upharpoonright\! N$ for some $N\in\omega$, it follows that
$\{\, (a_{m-k+1}, \dotsc, a_m) \mid f(a_1, \dotsc, a_m) \in X \,\}\in U^{k}$.
\end{claim}

The claim is proved by induction on $k$.  First, note that $\{\, (a_{m-k+1}, \dotsc, a_m) \mid f(a_1, \dotsc, a_m) \in X \,\}\in \mathcal{A}_k$ for every $1 \leq k \leq m$ and $a_1, \dotsc, a_{m-k}\in \omega$ 
 by $f^{-1}[X]\in \mathcal{A}_m$ and Proposition \ref{1510a}.
Consider the base case $k=1$. Suppose $\langle a_1, \dotsc, a_{m-1}\rangle\unlhd_{\mathcal{F}} \vec{b} \!\upharpoonright \! N$ for some $N \in \omega$. By the definition of orderly term, $f(a_1, \dotsc, a_m) \in \FR_{\mathcal{F}}(\vec{b})$ for all $a_m\in \FR_{\mathcal{F}}(\vec{b}-N)$.
 Thus $\FR_{\mathcal{F}}(\vec{b}-N)\subseteq \{\, a_m \mid f(a_1, \dotsc, a_m) \in X \,\}$. Since $\FR_{\mathcal{F}}(\vec{b}-N)$ is in $ U$, so is $\{\, a_m \mid f(a_1, \dotsc, a_m) \in X \,\}$. Now, assume $1<k \leq m$. 
Suppose $\langle a_1, \dotsc, a_{m-k}\rangle\unlhd_{\mathcal{F}} \vec{b} \!\upharpoonright \! N$ for some $N \in \omega$.
By Lemma \ref{0913a}, it suffices to show that $\{\, a_{m-k+1} \mid \{\, (a_{m-k+2}, \dotsc, a_m) \mid f(a_1, \dotsc, a_m) \in X \,\}\in U^{k-1} \,\}\in U$. This will follow if  $\FR_{\mathcal{F}}(\vec{b}-N) \subseteq \{\, a_{m-k+1} \mid \{\, (a_{m-k+2}, \dotsc, a_m) \mid f(a_1, \dotsc, a_m) \in X \,\}\in U^{k-1} \,\}$. Suppose $a_{m-k+1} \in \FR_{\mathcal{F}}(\vec{b}-N)$. Then $\langle a_1, \dotsc, a_{m-k+1}\rangle$ is a reduction of some initial segment of $\vec{b}$  with respect to $\mathcal{F}$. By the induction hypothesis, $\{\, (a_{m-k+2}, \dotsc, a_m) \mid f(a_1, \dotsc, a_m) \in X \,\}\in U^{k-1}$.
\end{proof}


In the case of addition, the next theorem is known as the Iterated Hindman Theorem, which is equivalent to Hindman's Theorem in $\mathsf{ZFC}$. Before that, we state a technical crucial lemma.

\begin{lemma}\cite{wcT13b} \label{0729}
Suppose $\langle \vec{a}_n \rangle_{n \in \omega}$ is a sequence in ${^\omega}\!\omega$  such that  
 $\vec{a}_{n+1} \leq_{\mathcal{F}} \vec{a}_n$ for all $n \in \omega$. Then there exists $\vec{b}\in  {^\omega}\!\omega$ such that $\vec{b}-n \leq_{\mathcal{F}} \vec{a}_n$ for all $n \in \omega$.
\end{lemma}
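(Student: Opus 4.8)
The plan is to build $\vec{b}$ by a diagonal \emph{fusion}, threading a single sequence through a descending chain of tails of the $\vec{a}_n$. The engine of the construction is the following \emph{advancement} observation: since $\vec{a}_{n+1}\leq_{\mathcal{F}}\vec{a}_n$, the entries of $\vec{a}_{n+1}$ are obtained by applying orderly terms to consecutive, disjoint, increasing blocks of $\vec{a}_n$; hence for every $N$ there is some $N'$ such that $\vec{a}_{n+1}-N'\leq_{\mathcal{F}}\vec{a}_n-N$, namely $N'$ counts the blocks of $\vec{a}_n$ that lie entirely below index $N$. This lets me push a tail of $\vec{a}_{n+1}$ arbitrarily far out inside a prescribed tail of $\vec{a}_n$, using only the single given relation $\vec{a}_{n+1}\leq_{\mathcal{F}}\vec{a}_n$, and never a common reduction of two incomparable sequences (which need not exist in an arbitrary algebra).

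Using advancement I would recursively choose natural numbers $p_0\leq p_1\leq \dotsb$ and set $\vec{d}_n:=\vec{a}_n-p_n$, maintaining the invariant $\vec{d}_{n+1}\leq_{\mathcal{F}}\vec{d}_n-1$. Start with $p_0=0$, so $\vec{d}_0=\vec{a}_0$. Given $\vec{d}_n=\vec{a}_n-p_n$, apply advancement to $\vec{a}_{n+1}\leq_{\mathcal{F}}\vec{a}_n$ with $N=p_n+1$ to obtain $p_{n+1}$ with $\vec{a}_{n+1}-p_{n+1}\leq_{\mathcal{F}}\vec{a}_n-(p_n+1)=\vec{d}_n-1$, and put $\vec{d}_{n+1}:=\vec{a}_{n+1}-p_{n+1}$. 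Finally define the diagonal $\vec{b}$ by $\vec{b}(n):=\vec{d}_n(0)=\vec{a}_n(p_n)$, so that $\vec{b}=\langle\,\vec{a}_0(p_0),\vec{a}_1(p_1),\vec{a}_2(p_2),\dotsc\,\rangle$.

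It then remains to verify $\vec{b}-n\leq_{\mathcal{F}}\vec{a}_n$. Since each $\vec{d}_n$ is a tail of $\vec{a}_n$ it satisfies $\vec{d}_n\leq_{\mathcal{F}}\vec{a}_n$ (take singleton blocks and identity terms), so by transitivity of $\leq_{\mathcal{F}}$ it suffices to show $\vec{b}-n\leq_{\mathcal{F}}\vec{d}_n$. I would isolate this as a general claim: whenever $\vec{x}_{k+1}\leq_{\mathcal{F}}\vec{x}_k-1$ for all $k$, the diagonal $\langle\vec{x}_k(0)\rangle_{k\in\omega}$ is $\leq_{\mathcal{F}}\vec{x}_0$; applying it to $\vec{x}_k:=\vec{d}_{n+k}$ gives exactly $\vec{b}-n\leq_{\mathcal{F}}\vec{d}_n$. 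To prove the claim I would express each $\vec{x}_k(0)$ as an orderly-term image of a finite block of $\vec{x}_0$ by composing the step witnesses $\vec{x}_k\leq_{\mathcal{F}}\vec{x}_{k-1}-1\leq_{\mathcal{F}}\dotsb\leq_{\mathcal{F}}\vec{x}_0$, which is legitimate because orderly terms are closed under orderly composition (Definition~\ref{0601a}).

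The main obstacle is precisely the bookkeeping in this last claim: one must check that the finite blocks of $\vec{x}_0$ witnessing the successive entries $\vec{x}_0(0),\vec{x}_1(0),\vec{x}_2(0),\dotsc$ are pairwise disjoint and strictly increasing, so that together they constitute an honest reduction. This is where the unit shift is essential: inside $\vec{x}_k$ the block producing $\vec{x}_{k+1}(0)$ sits at indices $\geq 1$ because $\vec{x}_{k+1}\leq_{\mathcal{F}}\vec{x}_k-1$, hence strictly above the index $0$ that produces $\vec{x}_k(0)$; and since any fixed reduction witness carries the index order of $\vec{x}_k$ to the block order of $\vec{x}_0$, tracing both entries down the \emph{same} composed witness keeps the $\vec{x}_0$-block of $\vec{x}_{k+1}(0)$ strictly above that of $\vec{x}_k(0)$. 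Chaining this across $k$ yields the required disjoint, increasing blocks, and hence the reduction, completing the proof.
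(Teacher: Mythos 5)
Your argument is correct: the advancement observation, the fusion producing the tails $\vec{d}_n=\vec{a}_n-p_n$ with $\vec{d}_{n+1}\leq_{\mathcal{F}}\vec{d}_n-1$, and the diagonal $\vec{b}(n)=\vec{d}_n(0)$ together with the block-tracing verification (using closure of orderly terms under orderly composition and the order-preservation of reduction witnesses) constitute a complete proof. The paper itself imports this lemma from \cite{wcT13b} without reproducing a proof, and your fusion argument is essentially the standard one given there, so there is nothing to flag.
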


\begin{theorem}\label{1211a}
Suppose $(\omega, \mathcal{F})$ is a Ramsey algebra. Then
for every $\vec{a} \in {^\omega}\omega$ and every indexed collection  $\{A_n\}_{n \in \omega}$ of subsets of $\omega$, there exists $\vec{b} \leq_{\mathcal{F}}\vec{a}$ such that $\FR_{\mathcal{F}}(\vec{b}-n)$ is either contained in or disjoint from $A_n$ whenever  $n \in \omega$. 
\end{theorem}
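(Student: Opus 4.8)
The plan is to diagonalize the defining property of a Ramsey algebra against the countable family $\{A_n\}_{n\in\omega}$ and then glue the resulting chain of reductions into a single sequence by means of Lemma~\ref{0729}.

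First I would build, by recursion on $n$, a $\leq_{\mathcal{F}}$-decreasing sequence $\langle \vec{a}_n\rangle_{n\in\omega}$ in ${^\omega}\omega$ such that $\vec{a}_0\leq_{\mathcal{F}}\vec{a}$ and, for each $n$, the set $\FR_{\mathcal{F}}(\vec{a}_n)$ is either contained in or disjoint from $A_n$. At stage $0$, apply the Ramsey algebra property to $\vec{a}$ and $A_0$ to obtain $\vec{a}_0\leq_{\mathcal{F}}\vec{a}$ with $\FR_{\mathcal{F}}(\vec{a}_0)$ homogeneous for $A_0$; given $\vec{a}_n$, apply the Ramsey algebra property to $\vec{a}_n$ and $A_{n+1}$ to obtain $\vec{a}_{n+1}\leq_{\mathcal{F}}\vec{a}_n$ with $\FR_{\mathcal{F}}(\vec{a}_{n+1})$ homogeneous for $A_{n+1}$. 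Since $\vec{a}_{n+1}\leq_{\mathcal{F}}\vec{a}_n$ holds at every stage, the hypothesis of Lemma~\ref{0729} is met by $\langle \vec{a}_n\rangle_{n\in\omega}$.

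Next I would invoke Lemma~\ref{0729} to produce $\vec{b}\in{^\omega}\omega$ with $\vec{b}-n\leq_{\mathcal{F}}\vec{a}_n$ for all $n\in\omega$. Taking $n=0$ gives $\vec{b}=\vec{b}-0\leq_{\mathcal{F}}\vec{a}_0\leq_{\mathcal{F}}\vec{a}$, so by transitivity of the pre-partial order $\leq_{\mathcal{F}}$ we obtain $\vec{b}\leq_{\mathcal{F}}\vec{a}$, as required. For the homogeneity, recall the monotonicity remark that $\vec{x}\leq_{\mathcal{F}}\vec{y}$ implies $\FR_{\mathcal{F}}(\vec{x})\subseteq\FR_{\mathcal{F}}(\vec{y})$; applying it to $\vec{b}-n\leq_{\mathcal{F}}\vec{a}_n$ yields $\FR_{\mathcal{F}}(\vec{b}-n)\subseteq\FR_{\mathcal{F}}(\vec{a}_n)$. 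Hence $\FR_{\mathcal{F}}(\vec{b}-n)$ inherits homogeneity from $\FR_{\mathcal{F}}(\vec{a}_n)$: if the latter lies inside $A_n$ then so does the former, and if the latter is disjoint from $A_n$ then so is the former. This establishes the conclusion for every $n\in\omega$.

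The construction itself is routine once Lemma~\ref{0729} is available; the only genuine content beyond bookkeeping is the passage from the chain $\langle\vec{a}_n\rangle_{n\in\omega}$ to a single diagonal sequence $\vec{b}$, which is precisely what Lemma~\ref{0729} supplies, so I expect no real obstacle there. The main points to watch are the index alignment---ensuring that the stage producing homogeneity for $A_n$ yields exactly the term $\vec{a}_n$ whose tail $\vec{b}-n$ is later compared against $A_n$---together with the trivial but necessary appeal to transitivity of $\leq_{\mathcal{F}}$ to conclude $\vec{b}\leq_{\mathcal{F}}\vec{a}$.
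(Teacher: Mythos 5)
Your proposal is correct and follows exactly the paper's own argument: build the $\leq_{\mathcal{F}}$-decreasing chain $\langle\vec{a}_n\rangle_{n\in\omega}$ by iterating the Ramsey property against each $A_n$, diagonalize via Lemma~\ref{0729} to get $\vec{b}$ with $\vec{b}-n\leq_{\mathcal{F}}\vec{a}_n$, and finish with transitivity and the monotonicity of $\FR_{\mathcal{F}}$ under reduction. No differences worth noting.
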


\begin{proof}
Suppose $\vec{a} \in {^\omega}\omega$ and $\{A_n\}_{n \in \omega}$ is an indexed collection of subsets of $\omega$. Since the algebra is Ramsey, for every $n \in \omega$, we can choose $\vec{a}_n \in {^\omega}\omega$ inductively with $\vec{a}_0 \leq_{\mathcal{F}} \vec{a}$ such that $\vec{a}_{n+1} \leq_{\mathcal{F}} \vec{a}_n$ and $\FR_{\mathcal{F}}(\vec{a}_n)$ is either contained in or disjoint from $A_n$ for each $n \in \omega$. By Lemma \ref{0729}, choose $\vec{b}\in  {^\omega}\omega$ such that $\vec{b}-n \leq_{\mathcal{F}} \vec{a}_n$ for all $n \in \omega$. 
By transitivity, $\vec{b} =\vec{b}-0 \leq_{\mathcal{F}} \vec{a}$. The conclusion follows since $\FR_{\mathcal{F}}(\vec{b}-n) \subseteq \FR_{\mathcal{F}}(\vec{a}_n)$ for all $n \in \omega$.
\end{proof}

\begin{lemma}\label{0805a}
Suppose $(\omega, \mathcal{F})$ is a Ramsey algebra and  $\mathcal{A}$ is a countable field of sets over $\omega$. For every $\vec{a} \in {^\omega}\omega$, there exists $\vec{b} \leq_{\mathcal{F}} \vec{a}$ such that 
$\{\,X \in \mathcal{A} \mid \FR_{\mathcal{F}}(\vec{b}-n) \subseteq X \text{ \textnormal{for some} } n \in \omega\,\}$
 is an ultrafilter on $\mathcal{A}$.
\end{lemma}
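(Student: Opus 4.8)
The plan is to obtain $\vec{b}$ directly from the Iterated Hindman Theorem (Theorem~\ref{1211a}) and then verify by hand that the prescribed collection is an ultrafilter on $\mathcal{A}$. Since $\mathcal{A}$ is countable, I would first enumerate $\mathcal{A}=\{X_n\}_{n\in\omega}$ (with repetitions allowed). Applying Theorem~\ref{1211a} to $\vec{a}$ and the indexed collection $\{X_n\}_{n\in\omega}$ produces $\vec{b}\leq_{\mathcal{F}}\vec{a}$ such that, for every $n\in\omega$, the set $\FR_{\mathcal{F}}(\vec{b}-n)$ is either contained in or disjoint from $X_n$. Write $U=\{\,X\in\mathcal{A}\mid \FR_{\mathcal{F}}(\vec{b}-n)\subseteq X\text{ for some }n\in\omega\,\}$ for the candidate ultrafilter.

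Next I would check the four defining properties of an ultrafilter on $\mathcal{A}$. Since $\mathcal{A}$ is a field of sets it contains $\omega$, and $\FR_{\mathcal{F}}(\vec{b}-0)\subseteq\omega$, so $\omega\in U$. Each $\FR_{\mathcal{F}}(\vec{b}-n)$ is nonempty, as it contains $\vec{b}(n)$ via the identity orderly term, hence no $\FR_{\mathcal{F}}(\vec{b}-n)$ is contained in $\emptyset$, giving $\emptyset\notin U$. For the ultrafilter dichotomy, fix $X\in\mathcal{A}$ and pick $n$ with $X=X_n$; by the choice of $\vec{b}$, either $\FR_{\mathcal{F}}(\vec{b}-n)\subseteq X$, so $X\in U$, or $\FR_{\mathcal{F}}(\vec{b}-n)$ is disjoint from $X$, so $\FR_{\mathcal{F}}(\vec{b}-n)\subseteq X^c$ and $X^c\in U$ (note $X^c\in\mathcal{A}$).

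The only step requiring a genuine observation is closure under intersections, and this is where I expect the main (though modest) obstacle to lie. The key fact is that the cut-off reduction sets are nested: for $m\leq n$ one has $\vec{b}-n\leq_{\mathcal{F}}\vec{b}-m$ (take the singleton blocks $\langle \vec{b}(n+k)\rangle$ with the identity term), and therefore $\FR_{\mathcal{F}}(\vec{b}-n)\subseteq\FR_{\mathcal{F}}(\vec{b}-m)$ by the monotonicity of $\FR_{\mathcal{F}}$ under $\leq_{\mathcal{F}}$ noted after its definition. Consequently, if $X,Y\in U$ with $\FR_{\mathcal{F}}(\vec{b}-m)\subseteq X$ and $\FR_{\mathcal{F}}(\vec{b}-n)\subseteq Y$, then setting $k=\max(m,n)$ yields $\FR_{\mathcal{F}}(\vec{b}-k)\subseteq X\cap Y$, and since $X\cap Y\in\mathcal{A}$ we get $X\cap Y\in U$. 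With all four properties established, $U$ is an ultrafilter on $\mathcal{A}$. I regard Theorem~\ref{1211a} as carrying essentially all the combinatorial weight; once it is invoked, the remainder is bookkeeping resting on the nesting of the cut-off reductions $\FR_{\mathcal{F}}(\vec{b}-n)$.
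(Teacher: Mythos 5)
Your proposal is correct and follows essentially the same route as the paper: enumerate the countable field $\mathcal{A}$, invoke Theorem~\ref{1211a} to get $\vec{b}\leq_{\mathcal{F}}\vec{a}$ deciding each $A_n$, and verify the ultrafilter axioms, with closure under intersections coming from $\FR_{\mathcal{F}}(\vec{b}-\max\{n,m\})\subseteq A\cap B$. Your explicit justification of the nesting $\FR_{\mathcal{F}}(\vec{b}-n)\subseteq\FR_{\mathcal{F}}(\vec{b}-m)$ for $m\leq n$ is a detail the paper leaves implicit, but the argument is the same.
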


\begin{proof}
Fix an enumeration $A_0, A_1, A_2, \dotsc$ of the sets in $\mathcal{A}$. Suppose $\vec{a} \in {^\omega}\omega$. By Theorem~\ref{1211a}, choose $\vec{b} \leq_{\mathcal{F}}\vec{a}$ such that $\FR_{\mathcal{F}}(\vec{b}-n)$ is either contained in or disjoint from $A_n$ for each  $n \in \omega$. 
We claim that  $U:=\{\,X \in \mathcal{A} \mid \FR_{\mathcal{F}}(\vec{b}-n) \subseteq X \text{ for some } n \in \omega\,\}$
is an ultrafilter on $\mathcal{A}$. Clearly, $\omega\in U$ and $\emptyset \notin U$. Suppose $A, B \in U$, say
$\FR_{\mathcal{F}}(\vec{b}-n) \subseteq A$ and $\FR_{\mathcal{F}}(\vec{b}-m) \subseteq B$  for some  $n,m \in \omega$. Then $A\cap B \in \mathcal{A}$ and 
$\FR_{\mathcal{F}}(\vec{b}-\max\{n,m\}) \subseteq A\cap B$, implying that $A\cap B\in U$. Meanwhile, by the choice of $\vec{b}$, either $A_n\in U$ or ${A_n}^{\!\!c} \in U$ for each $n \in \omega$. Hence, $U$ is an ultrafilter on $\mathcal{A}$.
\end{proof}

\begin{theorem}\label{0905b}
Assume $(\omega, \mathcal{F})$ is a nondegenerate Ramsey algebra and $\mathcal{F}$ is countable. Suppose $S_n$ is a countable collection of subsets of $\omega^n$ for every $n \geq 1$. Then there exists an admissible $\mathfrak{A}=\{ \mathcal{A}_n  \}_{n\geq 1}$ such that
\begin{enumerate}
\item $\mathcal{A}_n$ is countable superset of $S_n$ for every $n \geq 1$;
\item $\mathcal{F}$ is a collection of $\mathfrak{A}$-operations;
\item there exists a nonprincipal $\mathfrak{A}$-ultrafilter $U$ strongly reductible for $\mathcal{F}$. 
\end{enumerate}
\end{theorem}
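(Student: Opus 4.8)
The plan is to build $\mathfrak{A}$ and $U$ by an $\omega$-length inductive construction that alternately enlarges a countable admissible family and refines a reducing sequence, and then passes to a union. At the base stage I would invoke nondegeneracy (Theorem~\ref{130613b}) to fix $\vec{a}_0 \in {}^\omega\omega$ with $\FR_{\mathcal{F}}(\vec{c})$ infinite for every $\vec{c} \leq_{\mathcal{F}} \vec{a}_0$; this is what will ultimately force $U$ to be nonprincipal. Maintaining an increasing chain of admissible families $\mathfrak{A}^{(k)} = \{\mathcal{A}_n^{(k)}\}_{n \geq 1}$ (each countable, containing the $S_n$, with every $f \in \mathcal{F}$ an $\mathfrak{A}^{(k)}$-operation) together with reducing sequences $\vec{a}_0 \geq_{\mathcal{F}} \vec{b}_0 \geq_{\mathcal{F}} \vec{b}_1 \geq_{\mathcal{F}} \dotsb$, I would alternate two moves: use Lemma~\ref{0805a} to pick $\vec{b}_{k+1} \leq_{\mathcal{F}} \vec{b}_k$ so that $U_{k+1} := \{\,Y \in \mathcal{A}_1^{(k+1)} \mid \FR_{\mathcal{F}}(\vec{b}_{k+1} - i) \subseteq Y \text{ for some } i\,\}$ is an ultrafilter on $\mathcal{A}_1^{(k+1)}$, and use Lemma~\ref{1120b} to build $\mathfrak{A}^{(k+1)} \supseteq \mathfrak{A}^{(k)}$ from a carefully chosen countable seed. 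Finally I would set $\mathcal{A}_n = \bigcup_k \mathcal{A}_n^{(k)}$ and $U = \bigcup_k U_k$; Lemma~\ref{2311a} makes $\mathfrak{A} = \{\mathcal{A}_n\}_{n\geq 1}$ admissible with $\mathcal{F}$ a family of $\mathfrak{A}$-operations, giving conclusions (1) and (2) at once, and countability is clear.

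The seed fed to Lemma~\ref{1120b} at stage $k+1$ is where all the work is hidden, and it must contain two extra families beyond $\mathcal{A}_n^{(k)}$. First, to level $1$ I would adjoin every tail-reduction set $\FR_{\mathcal{F}}(\vec{b}_k - m)$, $m \in \omega$; this is what makes $U$ strongly reductible, since for $X \in U$, if $\FR_{\mathcal{F}}(\vec{b}_k - m) \subseteq X$ then $\vec{a} := \vec{b}_k - m$ witnesses Definition~\ref{1119e}, each $\FR_{\mathcal{F}}(\vec{a} - i) = \FR_{\mathcal{F}}(\vec{b}_k - (m+i))$ now lying in $\mathcal{A}_1$ and in $U$. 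Second, to each level $n$ I would adjoin, for every $X \in \mathcal{A}_{n+1}^{(k)}$, the projection set $W_X := \{\, \vec{a} \in \omega^n \mid \FR_{\mathcal{F}}(\vec{b}_k - m) \subseteq \{c \mid (\vec{a},c) \in X\} \text{ for some } m \,\}$; these countably many sets are exactly what the $\mathfrak{A}$-ultrafilter condition requires. The linchpin is the coherence identity $U \cap \mathcal{A}_1^{(k)} = U_k$ for every $k$, which I would derive from the elementary fact that $\vec{b}_{k+1} \leq_{\mathcal{F}} \vec{b}_k$ implies that each tail $\vec{b}_{k+1} - j$ is, for suitable $j$, a reduction of a prescribed tail $\vec{b}_k - n'$: only finitely many of the consecutive blocks witnessing $\vec{b}_{k+1} \leq_{\mathcal{F}} \vec{b}_k$ can meet the first $n'$ entries of $\vec{b}_k$, so dropping them yields $\FR_{\mathcal{F}}(\vec{b}_{k+1} - j) \subseteq \FR_{\mathcal{F}}(\vec{b}_k - n')$, and the $U_k$ cohere upward (downward coherence and consistency then give equality).

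Granting coherence, the verification runs smoothly. For $X \in \mathcal{A}_{n+1}$ choose $k$ with $X \in \mathcal{A}_{n+1}^{(k)}$; by Proposition~\ref{1510a} the fiber $\{c \mid (\vec{a},c) \in X\}$ lies in $\mathcal{A}_1^{(k)}$, so $U \cap \mathcal{A}_1^{(k)} = U_k$ gives $\{c \mid (\vec{a},c) \in X\} \in U$ precisely when $\FR_{\mathcal{F}}(\vec{b}_k - m) \subseteq \{c \mid (\vec{a},c) \in X\}$ for some $m$; that is, $\{\, \vec{a} \mid \{c \mid (\vec{a},c)\in X\} \in U \,\}$ equals the set $W_X$ already placed in $\mathcal{A}_n$, so $U$ is an $\mathfrak{A}$-ultrafilter. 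That $U$ is a genuine ultrafilter on $\mathcal{A}_1$ follows from coherence together with each $U_k$ being an ultrafilter on $\mathcal{A}_1^{(k)}$, and nonprincipality follows because every $X \in U$ contains some $\FR_{\mathcal{F}}(\vec{b}_k - m)$ with $\vec{b}_k - m \leq_{\mathcal{F}} \vec{a}_0$, hence infinite. I expect the main obstacle to be exactly this $\mathfrak{A}$-ultrafilter (measurability) requirement: the naive tail filter need not project into $\mathcal{A}_n$, since the relevant projection is an infinite union of infinite intersections of fibers, while a field of sets is closed only under finite operations. Surmounting it is precisely what forces the simultaneous adjunction of the sets $W_X$ during the construction and the coherence identity that identifies them with the true projections of the final $U$. (Strong reducibility then upgrades to idempotence for $\mathcal{F}$ with respect to $\mathfrak{A}$ by Theorem~\ref{1207f}, though that lies beyond the present statement.)
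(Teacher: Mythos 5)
Your proposal is correct and follows essentially the same construction as the paper: fix the sequence from Theorem~\ref{130613b}, alternate Lemma~\ref{1120b} (seeding each new countable admissible family with the tail sets $\FR_{\mathcal{F}}(\vec{b}_k-m)$ and the projections of the previous stage's sets under the current tail ultrafilter) with Lemma~\ref{0805a}, and pass to unions, using the coherence $U\cap\mathcal{A}_1^{(k)}=U_k$ to verify the $\mathfrak{A}$-ultrafilter and strong reducibility conditions. Your $W_X$ is exactly the projection set the paper adjoins at each inductive step, so the two arguments coincide.
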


\begin{proof}
First of all, by Theorem~\ref{130613b}, fix a sequence $\vec{a}$ such that $\FR_{\mathcal{F}}(\vec{b})$ is infinite whenever $\vec{b}\leq_{\mathcal{F}} \vec{a}$.
For every $m \in \omega$, we will construct inductively an admissible $\mathfrak{A}_m=\{\mathcal{A}_{n,m}\}_{n \geq 1}$, an ultrafilter $U_m$ on $\mathcal{A}_{1,m}$ and a sequence $\vec{a}_m$ of natural numbers. By Lemma~\ref{1120b}, choose an admissible $\mathfrak{A}_0=\{\mathcal{A}_{n,0}\}_{n \geq 1}$ such that $\mathcal{A}_{n,0}$ is countable superset of $S_n$ for every $n \geq 1$ and $\mathcal{F}$ is a collection of $\mathfrak{A}_0$-operations. Since $(\omega, \mathcal{F})$ is a  Ramsey algebra, by Lemma~\ref{0805a}, choose $\vec{a}_0\leq_{\mathcal{F}}\vec{a}$ such that $U_0=\{\,X \in \mathcal{A}_{1,0} \mid \FR_{\mathcal{F}}(\vec{a}_0-i) \subseteq X \text{ for some } i \in \omega\,\}$ is an ultrafilter on $\mathcal{A}_{1,0}$. At the inductive step, by Lemma~\ref{1120b}, choose an admissible $\mathfrak{A}_{m+1}=\{\mathcal{A}_{n,m+1}\}_{n \geq 1}$ such that $\mathcal{A}_{n,m+1}$ is countable superset of $\mathcal{A}_{n,m}$ for every $n \geq 1$,  that $\mathcal{F}$ is a collection of $\mathfrak{A}_{m+1}$-operations  and additionally  
\begin{itemize}
\item $\FR_{\mathcal{F}}(\vec{a}_m -i)\in \mathcal{A}_{1,m+1}$ for every $ i \in \omega$;
\item for every $n \geq 1$, we have $\{ \,( a_1,\dotsc, a_n) \mid \{\, a_{n+1} \mid  (a_1, \dotsc, a_{n+1}) \in X \,\}\in U_{m}\,\}\in \mathcal{A}_{n,m+1}$ for all $X \in \mathcal{A}_{n+1,m}$.
\end{itemize}
Now, by Lemma~\ref{0805a}, choose $\vec{a}_{m+1}\leq_{\mathcal{F}}\vec{a}_m$ such that $U_{m+1}=\{\,X \in \mathcal{A}_{1,m+1} \mid \FR_{\mathcal{F}}(\vec{a}_{m+1}-i) \subseteq X \text{ for some } i \in \omega\,\}$ is an ultrafilter on $\mathcal{A}_{1,m+1}$.

For every $n \geq 1$, take $\mathcal{A}_n= \bigcup_{m\in \omega} \mathcal{A}_{n,m}$. Take $\mathfrak{A}$ to be $\{\mathcal{A}_n\}_{n\geq 1}$.
Clearly, $\mathcal{A}_n$ is countable for every $n \geq 1$ since it is a countable union of countable sets. By our construction, $S_n\subseteq \mathcal{A}_{n,0}\subseteq  \mathcal{A}_n$ for every $n \geq 1$. 
By Lemma~\ref{2311a}, $\mathfrak{A}$ is admissible and  $\mathcal{F}$ is a collection of $\mathfrak{A}$-operations. 

Now, take $U=\bigcup_{m \in \omega} U_m$. 
We claim that $\FR_{\mathcal{F}}(\vec{a}_m-i)$ is infinite and is in $U$ for every $m,i \in \omega$.
To see this, suppose $m,i \in \omega$. By transitivity, $\vec{a}_m-i\leq_{\mathcal{F}} \vec{a}$. By our choice of $\vec{a}$, we know $\FR_{\mathcal{F}}(\vec{a}_m-i)$ is infinite.
Since $\vec{a}_{m+1}\leq_{\mathcal{F}} \vec{a}_m$, we have $\FR_{\mathcal{F}}(\vec{a}_{m+1}-i)\subseteq \FR_{\mathcal{F}}(\vec{a}_m-i)$. Thus $\FR_{\mathcal{F}}(\vec{a}_m-i) \in U_{m+1}\subseteq U$ as $\FR_{\mathcal{F}}(\vec{a}_m-i) \in  \mathcal{A}_{1,m+1}$. Therefore, $\FR_{\mathcal{F}}(\vec{a}_m-i)\in U$.

Because $U_0\subseteq U_1\subseteq U_2\, \dotsb $, it follows easily that $U$ is an ultrafilter on $\mathcal{A}_1$.
To see that $U$ is an $\mathfrak{A}$-ultrafilter, suppose $n \geq 1$ and $X\in \mathcal{A}_{n+1}$.
Then $X\in \mathcal{A}_{n+1,m}$ for some $m \in \omega$. Since $\mathfrak{A}_m$ is admissible, $\{\, a_{n+1} \mid  (a_1, \dotsc, a_{n+1}) \in X \,\}\in \mathcal{A}_{1,m}$ for  every $a_1, a_2, \dotsc, a_{n}\in \omega$. Therefore, $\{ \,( a_1,\dotsc, a_{n}) \mid \{\, a_{n+1} \mid  (a_1, \dotsc, a_{n+1}) \in X \,\}\in U\,\}
=\{ \,( a_1,\dotsc, a_{n}) \mid \{\, a_{n+1} \mid  (a_1, \dotsc, a_{n+1}) \in X \,\}\in U_m\,\}\in \mathcal{A}_{n,m+1}\subseteq \mathcal{A}_{n}$.
Finally, suppose $X\in U$. Then $X\in U_m$ for some $m\in \omega$. Hence, $\FR_{\mathcal{F}}(\vec{a}_{m}-i) \subseteq X$ for some $i \in \omega$.
By our claim, $X$ is infinite and $\FR_{\mathcal{F}}(\vec{a}_m-j)\in U$ for each $j \geq i$. Therefore,
$U$ is nonprincipal and strongly reductible.
\end{proof}

\section{Extension of Idempotentness to Orderly Terms} 


In this section, we will show that the analogue of Theorem~\ref{1007a} holds in our framework.

Suppose $\mathfrak{A}=\{ \mathcal{A}_n  \}_{n\geq 1}$ is admissible. If $X\subseteq \omega^n$ for some $n\in \omega$, whose value is implicitly implied by the context, then we write $X\overset{.}{\in} \mathfrak{A}$ to mean $X\in \mathcal{A}_n$. This is done out of convenience as seen in the following lemma.

\begin{lemma}\label{1207c}
Suppose $\mathfrak{A}=\{ \mathcal{A}_n  \}_{n\geq 1}$ is admissible.
Suppose $m \geq 1 $ and $h_1, \dotsc, h_m$ are $\mathfrak{A}$-operations. Then for every $n \geq 1$ and $X\in \mathcal{A}_{n+m-1}$,
$$ \{\, (a_1, \dotsc,a_{n-1}, \bar{x}_1, \dotsc, \bar{x}_m) \mid (a_1, \dotsc,a_{n-1}, h_1(\bar{x}_1), \dotsc, h_m(\bar{x}_m) ) \in X\,\}\overset{.}{\in} \mathfrak{A}$$
\end{lemma}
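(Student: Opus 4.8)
The plan is to prove the lemma by induction on $m$, peeling off one operation at a time. The two tools are the definition of an $\mathfrak{A}$-operation (Definition~\ref{1510b}), which lets us substitute a single $\mathfrak{A}$-operation into the \emph{terminal} coordinate block of a set in $\mathfrak{A}$, and the cyclic-shift clause of admissibility (Definition~\ref{1010a}(1)), which, applied repeatedly, lets us move any initial or terminal block of coordinates from one end of a tuple to the other while remaining inside $\mathfrak{A}$. Write each $h_i$ as $m_i$-ary, so $\bar{x}_i$ is a list of $m_i$ variables. For the base case $m=1$, with $X\in\mathcal{A}_n$, the set in question is exactly
$$\{\, (a_1, \dotsc, a_{n-1}, \bar{x}_1) \mid (a_1, \dotsc, a_{n-1}, h_1(\bar{x}_1)) \in X \,\},$$
which lies in $\mathfrak{A}$ precisely because $h_1$ is an $\mathfrak{A}$-operation.

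For the inductive step I would assume the statement for $m-1$ and take $X\in\mathcal{A}_{n+m-1}$. First apply the induction hypothesis with prefix count $n+1$ and operations $h_2,\dotsc,h_m$, whose slots are already the last $m-1$ coordinate positions of $X$; this is legitimate since $X\in\mathcal{A}_{(n+1)+(m-1)-1}=\mathcal{A}_{n+m-1}$, and it yields
$$X^\flat := \{\, (a_1, \dotsc, a_{n-1}, b, \bar{x}_2, \dotsc, \bar{x}_m) \mid (a_1, \dotsc, a_{n-1}, b, h_2(\bar{x}_2), \dotsc, h_m(\bar{x}_m)) \in X \,\}\overset{.}{\in}\mathfrak{A}.$$
It remains only to substitute $h_1$ into the single free coordinate $b$. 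Since $b$ is not in the terminal position, I would iterate the cyclic shift of admissibility to move the prefix $a_1,\dotsc,a_{n-1},b$ to the back, bringing $b$ into the last position and the block $\bar{x}_2,\dotsc,\bar{x}_m$ to the front while keeping the set in $\mathfrak{A}$; then substitute $h_1$ into this terminal coordinate via the $\mathfrak{A}$-operation property of $h_1$; and finally iterate the cyclic shift once more to rotate the front block $\bar{x}_2,\dotsc,\bar{x}_m$ back past $a_1,\dotsc,a_{n-1},\bar{x}_1$, restoring the order $(a_1,\dotsc,a_{n-1},\bar{x}_1,\dotsc,\bar{x}_m)$. Unwinding the chain of membership equivalences introduced by the two rotations and the substitution shows that the resulting set is exactly the set named in the statement, and it is in $\mathfrak{A}$.

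The only genuine work is the coordinate bookkeeping. Because the substitution primitive of Definition~\ref{1510b} acts solely on the terminal coordinate block, every operation other than the one currently sitting at the end has to be rotated into the last position and then rotated back; here $h_1$ is the one that needs moving. The main thing to get right is tracking the arity levels through these cyclic shifts and checking that each intermediate set has precisely the length demanded by the $\mathfrak{A}$-operation clause and by the induction hypothesis (e.g.\ that after bringing $b$ to the end the preceding block has length $n-1+\sum_{i\geq 2}m_i$, so that substituting the $m_1$-ary $h_1$ lands the set at level $n-1+\sum_{i\geq 1}m_i$). This is careful indexing rather than a real obstacle, since admissibility guarantees each cyclic rearrangement stays within $\mathfrak{A}$ at the same level.
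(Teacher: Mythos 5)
Your proposal is correct and follows essentially the same route as the paper: induct on $m$, apply the induction hypothesis to $h_2,\dotsc,h_m$ in the terminal positions, cyclically rotate (via repeated use of admissibility clause (1)) so that the remaining slot for $h_1$ becomes terminal, substitute $h_1$ using the $\mathfrak{A}$-operation property, and rotate back. The paper's proof carries out exactly this sequence with intermediate sets $Y_1, Y_2, Y_3$, so there is nothing to add beyond the bookkeeping you already flag.
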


\begin{proof}
The proof proceeds by induction on $m$. The base case $m=1$ holds since $h_1$ is an
$\mathfrak{A}$-operation. Assume $m >1$. Suppose $n \geq 1$ and $X\in \mathcal{A}_{n+m-1}$.
We need to show that $W\overset{.}{\in} \mathfrak{A}$, where $W$ is as stated in the lemma.
 By the induction hypothesis,
 $ Y_1:=\{\,(a_1, \dotsc,a_{n}, \bar{x}_2, \dotsc, \bar{x}_m) \mid (a_1, \dotsc,a_{n}, h_2(\bar{x}_2), \dotsc, h_m(\bar{x}_m)) \in X\,\}\overset{.}{\in}  \mathfrak{A}$.
By applying admissibility repeatedly, $Y_2 :=\{\, ( \bar{x}_2, \dotsc, \bar{x}_m, a_1, \dotsc,a_{n}) \mid ( a_1, \dotsc,a_{n}, \bar{x}_2, \dotsc, \bar{x}_m )\in Y_1\,\} \overset{.}{\in}  \mathfrak{A}$. Since $h_1$ is an $\mathfrak{A}$-operation, it follows that $Y_3 :=\{\, ( \bar{x}_2, \dotsc, \bar{x}_m, a_1, \dotsc,a_{n-1}, \bar{x}_1 ) \mid ( \bar{x}_2, \dotsc, \bar{x}_m, a_1,\dotsc,a_{n-1},h_1(\bar{x}_1) )\in Y_2\,\} \overset{.}{\in}  \mathfrak{A}$. Finally, $\{\, ( a_1, \dotsc,a_{n-1},\bar{x}_1 , \bar{x}_2, \dotsc, \bar{x}_m) \mid (\bar{x}_2, \dotsc, \bar{x}_m, a_1, \dotsc,a_{n-1},\bar{x}_1 )\in Y_3\,\} \overset{.}{\in}  \mathfrak{A}$.
It is easy to verify that this last set is equal to $W$. 
\end{proof}

\begin{theorem}\label{1207d}
Suppose $\mathfrak{A}=\{ \mathcal{A}_n  \}_{n\geq 1}$ is admissible.
If $\mathcal{F}$ is a collection of $\mathfrak{A}$-operations, then every orderly term over $\mathcal{F}$ is an $\mathfrak{A}$-operation.
\end{theorem}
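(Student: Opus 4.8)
The plan is to exploit the minimality built into the definition of orderly terms. Let $\mathcal{T}$ denote the collection of all $\mathfrak{A}$-operations on $\omega$. Since the orderly terms over $\mathcal{F}$ form, by Definition~\ref{0601a}, the smallest collection of operations on $\omega$ that contains $\mathcal{F}$, contains the identity, and is closed under orderly composition, it suffices to verify that $\mathcal{T}$ enjoys these three closure properties; minimality then forces every orderly term over $\mathcal{F}$ to belong to $\mathcal{T}$, which is the assertion. I would therefore recast the theorem as: $\mathcal{T}$ contains $\mathcal{F}$ and the identity, and is closed under orderly composition.

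The first two requirements are immediate. We have $\mathcal{F}\subseteq\mathcal{T}$ by hypothesis. For the identity function on $\omega$, which is $1$-ary, the defining set in Definition~\ref{1510b} reduces, for each $n\geq 1$ and $X\in\mathcal{A}_n$, to $\{\,(a_1,\dotsc,a_n)\mid (a_1,\dotsc,a_n)\in X\,\}=X\in\mathcal{A}_n$, so the identity is an $\mathfrak{A}$-operation. The substance of the proof lies in closure under orderly composition.

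For that step, suppose $g,h_1,\dotsc,h_n\in\mathcal{T}$, where $g$ is $n$-ary and each $h_i$ is $m_i$-ary, and let $f$ be the orderly composition given by $f(\bar{x}_1,\dotsc,\bar{x}_n)=g(h_1(\bar{x}_1),\dotsc,h_n(\bar{x}_n))$, an $M$-ary operation with $M=m_1+\dotsb+m_n$. Fix $k\geq 1$ and $X\in\mathcal{A}_k$; the goal is to show that $\{\,(a_1,\dotsc,a_{k+M-1})\mid (a_1,\dotsc,a_{k-1},f(a_k,\dotsc,a_{k+M-1}))\in X\,\}\overset{.}{\in}\mathfrak{A}$. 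The idea is to strip off $g$ first and then the $h_i$ simultaneously. Because $g$ is an $\mathfrak{A}$-operation, applying Definition~\ref{1510b} to $X$ produces $Y:=\{\,(a_1,\dotsc,a_{k-1},b_1,\dotsc,b_n)\mid (a_1,\dotsc,a_{k-1},g(b_1,\dotsc,b_n))\in X\,\}\in\mathcal{A}_{k+n-1}$. Now I would apply Lemma~\ref{1207c} to $Y$ with the $\mathfrak{A}$-operations $h_1,\dotsc,h_n$, taking the lemma's index to be $k$ (legitimate since $Y\in\mathcal{A}_{k+n-1}$, matching the hypothesis $Y\in\mathcal{A}_{k+n-1}$ with prefix length $k-1$). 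The lemma yields that $\{\,(a_1,\dotsc,a_{k-1},\bar{x}_1,\dotsc,\bar{x}_n)\mid (a_1,\dotsc,a_{k-1},h_1(\bar{x}_1),\dotsc,h_n(\bar{x}_n))\in Y\,\}\overset{.}{\in}\mathfrak{A}$. Unwinding the definition of $Y$ and using $g(h_1(\bar{x}_1),\dotsc,h_n(\bar{x}_n))=f(\bar{x}_1,\dotsc,\bar{x}_n)$, this set is exactly the one displayed above, and since $\bar{x}_1,\dotsc,\bar{x}_n$ together carry $M$ entries it lies in $\mathcal{A}_{k+M-1}$. Hence $f$ is an $\mathfrak{A}$-operation, establishing closure under orderly composition.

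The only real obstacle I anticipate is the index bookkeeping: correctly aligning the arity $n$ of $g$ and the arities $m_i$ of the $h_i$ against the prefix lengths demanded by Definition~\ref{1510b} and the hypothesis $X\in\mathcal{A}_{n+m-1}$ of Lemma~\ref{1207c}. All the genuine content is already packaged in those two results—Definition~\ref{1510b} handles the outer operation $g$, while Lemma~\ref{1207c} handles the simultaneous substitution of the inner operations $h_1,\dotsc,h_n$—so once the two substitutions are composed with the right indices, the conclusion is immediate, and the minimality of the orderly terms closes the argument.
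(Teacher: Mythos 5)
Your proposal is correct and follows essentially the same route as the paper: the paper argues by induction on the generation of orderly terms, which is just the inductive reformulation of your minimality argument, and the key step (applying Definition~\ref{1510b} to $g$ to obtain $Y$, then invoking Lemma~\ref{1207c} for the simultaneous substitution of $h_1,\dotsc,h_n$) is identical. The index bookkeeping you flag does check out, since $Y\in\mathcal{A}_{k+n-1}$ matches the hypothesis of Lemma~\ref{1207c} with prefix length $k-1$.
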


\begin{proof}
The proof goes by induction on the generation of the orderly terms over $\mathcal{F}$. The identity function on $\omega$ trivially is an $\mathfrak{A}$-operation while every operation in $\mathcal{F}$ is an $\mathfrak{A}$-operation by the assumption. Suppose that $f( \bar{x}_1, \dotsc, \bar{x}_m)=g(h_1(\bar{x}_1), \dotsc, h_m(\bar{x}_m))$ for some orderly terms  $g, h_1, \dotsc, h_m$ over $\mathcal{F}$. By the induction hypothesis, $g, h_1, \dotsc, h_m$ are $\mathfrak{A}$-operations. 
Suppose $n \geq 1$ and $X\in \mathcal{A}_n$. We want
$W:=\{\,( a_1, \dotsc,a_{n-1}, \bar{x}_1, \dotsc, \bar{x}_m) \mid ( a_1, \dotsc,a_{n-1}, f( \bar{x}_1, \dotsc, \bar{x}_m) ) \in X\,\}\overset{.}{\in}  \mathfrak{A}$.
Since $g$ is an $\mathfrak{A}$-operation, it follows that
$Y:= \{\, (a_1, \dotsc,a_{n-1}, b_1, \dotsc, b_m ) \mid ( a_1, \dotsc,a_{n-1}, g(b_1, \dotsc, b_m) ) \in X\,\}\in \mathcal{A}_{n+m-1}$. Therefore, Lemma~\ref{1207c} implies that $\{\,( a_1, \dotsc,a_{n-1}, \bar{x}_1, \dotsc, \bar{x}_m) \mid (a_1,\dotsc,a_{n-1}, h_1(\bar{x}_1), \dotsc, h_m(\bar{x}_m)) \in Y\,\}\overset{.}{\in}  \mathfrak{A}$. This last set is equal to $W$. 
\end{proof}

We introduce some additional notations for the sake of brevity in the next lemma. Suppose $\mathfrak{A}$ is admissible and  $\bar{U}_i$ is a list of 
$\mathfrak{A}$-ultrafilters $U_{1, i}, \dotsc, U_{p_i,i}$ for each $1\leq i\leq n$.
Let $\otimes(\bar{U}_1, \dotsc, \bar{U}_n)$ denote $U_{1,1} \otimes \dotsb \otimes U_{p_1,1} \otimes \dotsb \otimes U_{1,n} \otimes \dotsb \otimes U_{p_n,n}$. 



\begin{lemma}\label{2606c}
Suppose $\mathfrak{A}=\{ \mathcal{A}_n  \}_{n\geq 1}$ is admissible.
Suppose $m \geq 1 $ and  $h_1, \dotsc, h_m$ are $\mathfrak{A}$-operations. 
Then $X \in {h_1}^{\mathfrak{A}}_*(\bar{U}_1)\otimes \dotsc\otimes {h_m}^{\mathfrak{A}}_*(\bar{U}_m)$
if and only if $\{\, (\bar{x}_1, \dotsc, \bar{x}_m) \mid (h_1(\bar{x}_1), \dotsc, h_m(\bar{x}_m))\in X \,\} \in \otimes(\bar{U}_1, \dotsc, \bar{U}_m)$ for each $X \in \mathcal{A}_m$
and lists $\bar{U}_1, \dotsc, \bar{U}_m$ of $\mathfrak{A}$-ultrafilters of the correct length.
\end{lemma}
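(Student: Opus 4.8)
The plan is to argue by induction on $m$, the number of operations. For the base case $m=1$ there is nothing to do beyond unwinding definitions: with $X \in \mathcal{A}_1$, the set $\{\bar{x}_1 \mid h_1(\bar{x}_1) \in X\}$ is precisely $h_1^{-1}[X]$ and $\otimes(\bar{U}_1)=U_{1,1}\otimes\dots\otimes U_{p_1,1}$, so the asserted equivalence is exactly the definition of $(h_1)^{\mathfrak{A}}_*(\bar{U}_1)$.

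For the inductive step, fix $X \in \mathcal{A}_m$ and write $W_i = (h_i)^{\mathfrak{A}}_*(\bar{U}_i)$, each an $\mathfrak{A}$-ultrafilter by Theorem~\ref{0911a}. First I would peel off the last operation on the left-hand side. By the definition of the tensor product, $X \in W_1 \otimes \dots \otimes W_m$ iff $Z \in W_1 \otimes \dots \otimes W_{m-1}$, where $Z = \{(a_1, \dots, a_{m-1}) \mid \{a_m \mid (a_1, \dots, a_m) \in X\} \in W_m\}$; here $Z \in \mathcal{A}_{m-1}$ because $W_m$ is an $\mathfrak{A}$-ultrafilter. Since the slice $\{a_m \mid (a_1, \dots, a_m) \in X\}$ lies in $\mathcal{A}_1$ by Proposition~\ref{1510a} and $W_m = (h_m)^{\mathfrak{A}}_*(\bar{U}_m)$, membership of this slice in $W_m$ is equivalent to $\{\bar{x}_m \mid (a_1, \dots, a_{m-1}, h_m(\bar{x}_m)) \in X\} \in \otimes(\bar{U}_m)$. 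Applying the induction hypothesis to $h_1, \dots, h_{m-1}$ and the set $Z$ then rewrites $Z \in W_1 \otimes \dots \otimes W_{m-1}$ as $W' \in \otimes(\bar{U}_1, \dots, \bar{U}_{m-1})$, where $W' = \{(\bar{x}_1, \dots, \bar{x}_{m-1}) \mid \{\bar{x}_m \mid (h_1(\bar{x}_1), \dots, h_m(\bar{x}_m)) \in X\} \in \otimes(\bar{U}_m)\}$.

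On the right-hand side I would work directly with $V = \{(\bar{x}_1, \dots, \bar{x}_m) \mid (h_1(\bar{x}_1), \dots, h_m(\bar{x}_m)) \in X\}$. By Lemma~\ref{1207c} with $n=1$ we have $V \overset{.}{\in} \mathfrak{A}$, that is, $V \in \mathcal{A}_{p_1 + \dots + p_m}$, so Proposition~\ref{0913a} applies to the flat list $U_{1,1},\dots,U_{p_m,m}$. Splitting off the last $p_m$ coordinates (those comprising $\bar{x}_m$) gives $V \in \otimes(\bar{U}_1, \dots, \bar{U}_m)$ iff $W'' \in \otimes(\bar{U}_1, \dots, \bar{U}_{m-1})$, where $W'' = \{(\bar{x}_1, \dots, \bar{x}_{m-1}) \mid \{\bar{x}_m \mid (\bar{x}_1, \dots, \bar{x}_m) \in V\} \in \otimes(\bar{U}_m)\}$. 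Unwinding the definition of $V$ yields $W'' = W'$, so the two reductions meet and the induction closes.

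The hard part will be purely bookkeeping: the left-hand reduction peels a single coordinate $a_m$ off $\mathcal{A}_m$, whereas the right-hand reduction via Proposition~\ref{0913a} peels the entire block $\bar{x}_m$ of $p_m$ coordinates, and I must verify that after inserting the definition of $(h_m)^{\mathfrak{A}}_*$ these produce literally the same intermediate set $W' = W''$ over $\mathcal{A}_{m-1}$. Making this rigorous requires tracking which slices land in which $\mathcal{A}_j$, a matter settled by admissibility together with Proposition~\ref{1510a}, Lemma~\ref{1207c}, and Theorem~\ref{0911a}, which guarantee that the operations and the composed ultrafilters behave well with respect to $\mathfrak{A}$.
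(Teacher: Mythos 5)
Your proposal is correct and follows essentially the same route as the paper: induction on $m$, peeling the last coordinate off the left-hand side via the definition of $\otimes$, unwinding $(h_m)^{\mathfrak{A}}_*$ on the resulting slice, applying the induction hypothesis to the intermediate set in $\mathcal{A}_{m-1}$, and reassembling the last block $\bar{x}_m$ with Proposition~\ref{0913a}; the only difference is the (immaterial) order in which you unwind $(h_m)^{\mathfrak{A}}_*$ relative to invoking the induction hypothesis. The bookkeeping identity $W'=W''$ that you flag as the hard part is exactly the final identification the paper also relies on, justified by Lemma~\ref{1207c} and Proposition~\ref{1510a} as you indicate.
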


\begin{proof}
The proof goes by induction on $m$. The base case $m=1$ holds  by the definition of ${h_1}^{\mathfrak{A}}_*$. Assume $m >1$. 
Suppose $X \in \mathcal{A}_m$ and $\bar{U}_1, \dotsc, \bar{U}_m$ are lists of $\mathfrak{A}$-ultrafilters of the correct length.
Let $Z$ be $\{\, (\bar{x}_1, \dotsc, \bar{x}_m ) \mid(h_1(\bar{x}_1), \dotsc, h_m(\bar{x}_m) ) \in X\,\}$.
By Lemma~\ref{1207c}, $Z\overset{.}{\in} \mathfrak{A}$. 
By definition, $X \in {h_1}^{\mathfrak{A}}_*(\bar{U}_1)\otimes \dotsc\otimes {h_m}^{\mathfrak{A}}_*(\bar{U}_m)$
if and only if $\{\, (b_1, \dotsc, b_{m-1}) \mid \{\, b_m \mid (b_1, \dots, b_m) \in X \,\} \in   {h_m}^{\mathfrak{A}}_*(\bar{U}_m)\,\} \in {h_1}^{\mathfrak{A}}_*(\bar{U}_1)\otimes \dotsc\otimes {h_{m-1}}^{\mathfrak{A}}_*(\bar{U}_{m-1})$.
By the induction hypothesis, this holds if and only if $\{\, (\bar{x}_1, \dotsc, \bar{x}_{m-1}) \mid \{\, b_m \mid (h_1(\bar{x}_1), \dotsc, h_{m-1}(\bar{x}_{m-1}), b_m)\in X \,\} \in {h_m}^{\mathfrak{A}}_*(\bar{U}_m)\,\}  \in$ \linebreak
$\otimes(\bar{U}_1, \dotsc, \bar{U}_{m-1})$.
In turn, this holds if and only if
$\{\, (\bar{x}_1, \dotsc, \bar{x}_{m-1}) \mid \{\, (\bar{x}_m) \mid (\bar{x}_1, \dotsc, \bar{x}_m) \in Z\,\} \in \bar{U}_m\,\}
\in \otimes(\bar{U}_1, \dotsc, \bar{U}_{m-1})$.
By Lemma~\ref{0913a}, this holds if and only if $Z \in \otimes(\bar{U}_1, \dotsc, \bar{U}_m)$.
\end{proof}


\begin{theorem}\label{1208h}
Suppose $\mathfrak{A}=\{ \mathcal{A}_n  \}_{n\geq 1}$ is admissible and  $\mathcal{F}$ is a collection of $\mathfrak{A}$-operations.
If $U$ is an $\mathfrak{A}$-ultrafilter idempotent for $\mathcal{F}$ with respect to $\mathfrak{A}$, then $U$ is idempotent for the collection of orderly terms over $\mathcal{F}$ with respect to $\mathfrak{A}$.
\end{theorem}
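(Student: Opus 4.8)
The plan is to prove the statement by induction on the generation of the orderly terms over $\mathcal{F}$, exactly paralleling the generation-based induction used in Theorem~\ref{1207d}. Throughout I will lean on Theorem~\ref{1207d}, which guarantees that every orderly term is itself an $\mathfrak{A}$-operation, so that all the constructions $f_\ast^{\mathfrak{A}}$, $g^{-1}[X]$, and so on, are legitimate. For the base of the induction there are two cases. The identity function is unary with $\mathrm{id}^{-1}[X]=X$ and $U^1=U$, so $\mathrm{id}_\ast^{\mathfrak{A}}(U)=\{\,X\in\mathcal{A}_1\mid X\in U\,\}=U$; and every $f\in\mathcal{F}$ is idempotent for $U$ with respect to $\mathfrak{A}$ by hypothesis.

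For the inductive step, suppose $f(\bar{x}_1,\dotsc,\bar{x}_n)=g(h_1(\bar{x}_1),\dotsc,h_n(\bar{x}_n))$, where $g,h_1,\dotsc,h_n$ are orderly terms over $\mathcal{F}$ with $g$ being $n$-ary and each $h_i$ being $p_i$-ary, so that $f$ is $m$-ary with $m=p_1+\dotsb+p_n$. By the induction hypothesis, $g_\ast^{\mathfrak{A}}(U,\dotsc,U)=U$ and $(h_i)_\ast^{\mathfrak{A}}(U,\dotsc,U)=U$ for each $i$. Since $f_\ast^{\mathfrak{A}}(U,\dotsc,U)$ and $U$ are both ultrafilters on $\mathcal{A}_1$ (Theorem~\ref{0911a}), it suffices to prove that $X\in f_\ast^{\mathfrak{A}}(U,\dotsc,U)$ if and only if $X\in U$ for every $X\in\mathcal{A}_1$.

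The key step is to factor the preimage through $g$. Fix $X\in\mathcal{A}_1$ and set $Y:=g^{-1}[X]$, which lies in $\mathcal{A}_n$ because $g$ is an $\mathfrak{A}$-operation. Unwinding the orderly composition, one checks directly that $f^{-1}[X]=\{\,(\bar{x}_1,\dotsc,\bar{x}_n)\mid (h_1(\bar{x}_1),\dotsc,h_n(\bar{x}_n))\in Y\,\}$. Now I apply Lemma~\ref{2606c} to the operations $h_1,\dotsc,h_n$ with the set $Y$ and with each list $\bar{U}_i$ taken to be $U,\dotsc,U$ ($p_i$ times): this yields that $f^{-1}[X]\in\otimes(\bar{U}_1,\dotsc,\bar{U}_n)$ if and only if $Y\in (h_1)_\ast^{\mathfrak{A}}(\bar{U}_1)\otimes\dotsb\otimes (h_n)_\ast^{\mathfrak{A}}(\bar{U}_n)$. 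With this choice of the $\bar{U}_i$, the term $\otimes(\bar{U}_1,\dotsc,\bar{U}_n)$ is exactly $U^m$, while $(h_i)_\ast^{\mathfrak{A}}(\bar{U}_i)=U$ by the induction hypothesis, so the right-hand ultrafilter is $U^n$. Thus $f^{-1}[X]\in U^m$ if and only if $g^{-1}[X]\in U^n$. Chaining the definitions, $X\in f_\ast^{\mathfrak{A}}(U,\dotsc,U)$ iff $f^{-1}[X]\in U^m$ iff $g^{-1}[X]\in U^n$ iff $X\in g_\ast^{\mathfrak{A}}(U,\dotsc,U)$, which equals $U$ by the induction hypothesis. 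This completes the induction.

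The routine bases and the final chain of equivalences are mechanical; the real content is already packaged inside Lemma~\ref{2606c}, so the main obstacle is purely one of correct bookkeeping: tracking the arities $p_1,\dotsc,p_n$ of the inner operations, verifying that the concatenated list of $U$'s of total length $m$ is precisely $U^m$, and confirming that substituting $Y=g^{-1}[X]$ for the generic set in Lemma~\ref{2606c} turns its relevant side into $f^{-1}[X]$. No genuinely new estimate or construction is required beyond what Lemma~\ref{2606c} and Theorem~\ref{1207d} already provide.
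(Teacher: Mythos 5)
Your proposal is correct and takes essentially the same route as the paper's own proof: the same induction on the generation of orderly terms, with the inductive step handled by applying Lemma~\ref{2606c} to $g^{-1}[X]$ with each list $\bar{U}_i$ equal to $U,\dotsc,U$, so that $f^{-1}[X]\in U^m$ is equivalent to $g^{-1}[X]\in U^n$. The only difference is presentational bookkeeping (naming $Y=g^{-1}[X]$ and tracking the arities $p_i$ explicitly), not mathematical content.
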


\begin{proof}
By Lemma \ref{1207d}, every orderly term $f$ over $\mathcal{F}$ is indeed an $\mathfrak{A}$-operation 
The proof goes by induction on the generation of the orderly terms over $\mathcal{F}$. If $I$ is the identity function on $\omega$, then $I^{\mathfrak{A}}_*$ is the identity function on $\beta\mathfrak{A}$ and hence $I^{\mathfrak{A}}_*(U)=U$. If $f \in \mathcal{F}$, then $U$ is idempotent for $f$ with respect to $\mathfrak{A}$ by the assumption. Suppose that $f( \bar{x}_1, \dotsc, \bar{x}_m)=g(h_1(\bar{x}_1), \dotsc, h_m(\bar{x}_m))$ for some orderly terms $g, h_1, \dotsc, h_m$ over $\mathcal{F}$. By the induction hypothesis,  $g^{\mathfrak{A}}_*(U,\dotsc,U)=U$ and ${h_i}^{\mathfrak{A}}_*(U,\dotsc,U)=U$ for each $1\leq i\leq m$. 
It suffices to show that $f^{\mathfrak{A}}_*( U,\dotsc, U)=g^{\mathfrak{A}}_*({h_1}^{\mathfrak{A}}_*(U,\dotsc, U  ), \dotsc, {h_m}^{\mathfrak{A}}_*(U,\dotsc, U))$. 
Suppose $X \in \mathcal{A}_1$. 
 By Lemma~\ref{2606c}, $g^{-1}[X] \in {h_1}^{\mathfrak{A}}_*(U,\dotsc, U)\otimes \dotsc\otimes {h_m}^{\mathfrak{A}}_*(U,\dotsc, U)$
 if and only if 
$\{\, (\bar{x}_1, \dotsc, \bar{x}_m) \mid (h_1(\bar{x}_1), \dotsc, h_m(\bar{x}_n))\in g^{-1}[X] \,\} \in U\otimes \dotsb\otimes U$
and so if and only if $f^{-1}[X] \in U\otimes \dotsb\otimes U$.
\end{proof}


\section{Open problems}

Our introduced framework opens up a wide range of possible research problems.
Here, only some problems that interest us and are most relevant to this paper are mentioned.


Suppose $S_n$ is a countable collection of subsets of $\omega^n$ for every $n \geq 1$.
The construction in the proof of Theorem~\ref{0905b} can be modified to give us a nontrivial admissible $\mathfrak{A}=\{ \mathcal{A}_n  \}_{n\geq 1}$ such that $\mathcal{A}_n$ is a countable superset of $S_n$ for every $n\geq 1$ and there exists a nonprincipal $\mathfrak{A}$-ultrafilter.
In view of Example~\ref{1912a}, it is natural to ask whether 
this construction can be refined further to ensure that every ultrafilter on $\mathcal{A}_1$ is an $\mathfrak{A}$-ultrafilter. 
On the other hand, whether there exists an admissible $\mathfrak{A}$ such that only the principal ultrafilters are $\mathfrak{A}$-ultrafilters seems to be interesting and not obvious.

Suppose $\mathfrak{A}$ is admissible.  If $f$ is a binary associative $\mathfrak{A}$-operation, an ultrafilter idempotent for $f$ with respect to $\mathfrak{A}$ need not exist. On the other hand, Example~\ref{2706e} shows that such ultrafilters may exist even if the algebra is not Ramsey. These unexpected phenomena may be explained by the Ramseyness of the algebra from the point of view of $\mathfrak{A}$. Hence, one can try to formulate the notion of $\mathfrak{A}$-Ramsey algebra. With the right formulation, one might prove that whenever
$(\omega,f)$ is an $\mathfrak{A}$-Ramsey algebra, there exists an $\mathfrak{A}$-ultrafilter idempotent for $f$ with respect to $\mathfrak{A}$, at least in some countable setting.

Finally, we emphasize that Carlson's original problem of the existence of idempotent ultrafilters for every Ramsey algebra is the ultimate goal. The work initiated here is the starting point for continuation work towards that direction as well as motivating others to do so.

\section*{Acknowledgements}

This paper grows partially out of the author's thesis submitted as a partial fulfilment for the award of PhD to the Ohio State University under the supervision of Timothy Carlson.  The author would like to thank Carlson for introducing the concept of Ramsey algebras and for his great mentorship. The open problem regarding the existence of idempotent ultrafilters for every Ramsey algebra was conveyed by him during the supervision.

\section*{Conflict of Interests}

The author declares that there is no conflict of interests
regarding the publication of this article.

\thebibliography{99}
\bibitem{BL70} G.~Birkhoff and J.D.~Lipson, Heterogeneous algebras, J. Combinatorial Theory~8 (1970), 115--133.
\bibitem{BH87} A.~Blass and N.~Hindman, On strongly summable ultrafilters and
  union ultrafilters, Trans. Amer. Math. Soc.~304 (1987), no.~1, 83--97. 
\bibitem{tC88} T.J.~Carlson, Some unifying principles in Ramsey theory,  Discrete Mathematics~68 (1988), 117--169.
\bibitem{CS84} T.J.~Carlson and S.~G.~Simpson, A dual form of Ramsey's theorem, Adv. in Math.~53 (1984), 265-290.


\bibitem{wC77} W.~Comfort, Ultrafilters - some old and some new results, Bull. Amer.
Math. Soc.~83 (1977), 417-455.

\bibitem{eE74} E.~Ellentuck, A new proof that analytic sets are Ramsey, J. Symb. Logic~39 (1974), 163-165.



\bibitem{GP73} F.~Galvin and K.~Prickry, Borel sets and Ramsey's theorem, J. Symb. Logic~38 (1973), 193-198.


\bibitem{HJ63} A.W.~Hales and R.I.~Jewett, Regularity and positional games, Trans. Amer. Math. Soc.~124 (1966), 360-367.

\bibitem{nH74} N.~Hindman, Finite sums from sequences within cells of a partition of $\mathbb{N}$, J. Combin. Theory (Series A)~17 (1974), 1-11.


\bibitem{nH79} N.~Hindman, Ultrafilters and combinatorial number theory, in Number Theory Carbondale, M.~Nathanson ed., Lecture Notes in Math.~751 (1979), 119-184.

\bibitem{nH87} N.~Hindman, Summable ultrafilters and finite sums, in Logic and Combinatorics, S.~Simpson ed., Contemp. Math.~65 (1987), 263-274.


\bibitem{HS12} N.~Hindman and D.~Strauss, Algebra in the Stone-\v{C}ech Compactification: Theory and Applications, Second Edition, Walter de Gruyter, Berlin (2012).












\bibitem{wcT13} W.C.~Teh, Ramsey algebras, arxiv:1403.5831v2 (to be updated to v3).

\bibitem{wcT13a} W.C.~Teh, Ramsey algebras and formal orderly terms, Notre Dame J. Form. Log. (to appear).

\bibitem{wcT13b} W.C.~Teh, Ramsey algebras and strongly reductible ultrafilters, Bull. Malays. Math. Sci. Soc.~37(4) (2014), 931-938.


\end{document}